\newcommand{\p}{\mathfrak{p}}
\newcommand{\Q}{\mathbb{Q}}
\newcommand{\N}{\mathbb{N}}
\newcommand{\Z}{\mathbb{Z}}
\newtheorem{defi}{Definition}
[section]
\newtheorem{thm}[defi]{Theorem}%[subsection]
\newtheorem{lem}[defi]{Lemma}%[subsection]
\newtheorem{coro}[defi]{Corollary}%[subsection]
\newtheorem{prop}[defi]{Proposition}%[subsection]
\newtheorem{conj}[defi]{Conjecture}
\theoremstyle{definition}
\newtheorem{remrk}[defi]{Remark}%[subsection]
\newtheorem*{lem*}{Lemme}
\newtheorem*{prop*}{Proposition}
\newtheorem*{thm*}{Theorem}
\newtheorem*{remrk*}{Remark}
\newtheorem*{nota}{Notation}
\numberwithin{exos}{subsection}
\title{A note on P\'olya groups}
\date{}
\begin{document}
%\normalem
\title{A note on P\'olya groups}
    
    \author[Étienne Emmelin]{Étienne Emmelin}
    \address{Normandie Univ, UNICAEN, CNRS, LMNO, 14000 Caen, France}
    \email{etienne.emmelin@unicaen.fr}
        
    \date{\today}

    \subjclass[2020]{11R11, 11R29, 11R32, 11R37 %voir zbmaths classification
    }
    
    \keywords{P\'olya groups, Class groups, Quadratic fields.}
    %conjugation problem,

\begin{abstract}
Let $K/\Q$ be a finite Galois extension. The P\'olya group of $K$ is the subgroup of the class group $Cl(K)$, generated by the classes of ambiguous ideals of $K$. In this note, among other results, we prove that every finite abelian group is isomorphic to the P\'olya group of a number field.

\end{abstract}
\maketitle

\section{Introduction}
\begin{defi}[\cite{zan}]
Let $K$ a number field and $l$ a positive integer. Define
\begin{equation*}
    I_K^l:=\underset{\mathfrak{N}(\p)=l}{\prod_{\p\; \text{prime of $K$}}}\p,
\end{equation*}
the ideal of $\mathcal{O}_K$ formed by the product of all primes of $K$ with norm $l$. When $l$ is not a norm of prime ideal, then $I_K^l=\mathcal{O}_K$.
A number field is called a P\'olya field when all the ideals $I_K^l$ are principal.
\end{defi}
\begin{defi}[\cite{cah}, II.4]
    The P\'olya group of a number field is a subgroup of the class group of $K$ generated by the classes of the ideals of $I_K^l$. This subgroup is noted $Po(K)$.
\end{defi}

In our study, we will only consider number fields Galois over $\Q$. This hypothesis gives us an other definition of P\'olya group in term of ambiguous ideals.\\

Let $K/\Q$ a finite Galois extension and $G$ its Galois group. Then, the group of ambiguous ideals $\mathcal{I}(K)^G$ of $K$, i.e. of ideals invariant under the action of $G$, is generated by the primes of the form   
\begin{equation*}
    \sqrt{p\mathcal{O}_K}:=\underset{\p \;\text{prime of K}}{\prod_{\p|p}}\p=(I_K^{p^{f_p(K/\Q)}}),
\end{equation*}
where $p$ is a positive prime number and $f_p(K/\Q)$ the inertia degree of each prime $\p |p$.\\
Consequently, the P\'olya group of $K$ is generated by the classes of the ambiguous ideals. For simplicity, for every prime number $p$, we will use the notation $\sqrt{p\mathcal{O}_K}$ instead of $I_K^{p^{f_p(K/\Q)}}$.\\

 We give now some results about the behavior of the P\'olya groups in linearly disjoint Galois extensions to obtain the main proposition of this section (see for instance \cite{al} or \cite{jlc}). 
\begin{nota}
    Let $L/K$ a finite extension of number fields. Consider the injective morphism:
    \begin{equation*}
    \begin{array}{ccc}
      j_K^L: \mathcal{I}(K) &\to & \mathcal{I}(L) \\
       I &\mapsto & I\mathcal{O}_L .
    \end{array}
    \end{equation*}
It induces a morphism:
\begin{equation*}
    \begin{array}{ccc}
         \varepsilon_K^L: Cl(K)&\to  &Cl(L)\\
         \overline{I}&\mapsto &\overline{I\mathcal{O}_L}.
    \end{array}
\end{equation*}
\end{nota}
If $L$ and $K$ are  Galois extensions over $\Q$, the P\'olya group of $K$ and $L$ behave well with respect to this morphism.
\begin{prop}
    Let $K\subseteq L$ are two Galois extensions over $\Q$, then
    \begin{equation*}
        \varepsilon_K^L(Po(K))\subseteq Po(L).
    \end{equation*}
\end{prop}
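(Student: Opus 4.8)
The plan is to show that the image under $\varepsilon_K^L$ of a generator of $Po(K)$ lies in $Po(L)$, and since $\varepsilon_K^L$ is a group morphism this suffices. The generators of $Po(K)$ are the classes $\overline{\sqrt{p\mathcal{O}_K}}$ for $p$ running over the rational primes, so I would fix such a $p$ and analyze the ideal $\left(\sqrt{p\mathcal{O}_K}\right)\mathcal{O}_L$ inside $\mathcal{O}_L$, aiming to identify it with $\sqrt{p\mathcal{O}_L}$ (possibly up to a power), whose class is by definition a generator of $Po(L)$.

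First I would recall the factorization description given in the excerpt: since $K/\Q$ is Galois, $\sqrt{p\mathcal{O}_K} = \prod_{\p \mid p} \p$, the product of all primes of $K$ above $p$, each occurring once. Extending to $\mathcal{O}_L$ gives
\begin{equation*}
    \left(\sqrt{p\mathcal{O}_K}\right)\mathcal{O}_L = \prod_{\p \mid p} \p\mathcal{O}_L.
\end{equation*}
Next I would invoke the standard behaviour of prime decomposition in the tower $\Q \subseteq K \subseteq L$: every prime $\mathfrak{P}$ of $L$ above $p$ lies above exactly one prime $\p$ of $K$ above $p$, and conversely each $\p \mid p$ is divided by some $\mathfrak{P} \mid p$. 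Because $L/\Q$ is Galois, the ramification index $e(\mathfrak{P}/p)$ and inertia degree are constant across all $\mathfrak{P} \mid p$; writing $e_K = e(\p/p)$ and $e_L = e(\mathfrak{P}/p)$ for the (constant) ramification indices in $K/\Q$ and $L/\Q$, the factorization of $\p\mathcal{O}_L$ is $\prod_{\mathfrak{P}\mid\p} \mathfrak{P}^{\,e_L/e_K}$. Collecting over all $\p \mid p$ and using that the $\mathfrak{P}\mid\p$ partition the set of primes of $L$ above $p$, I would obtain
\begin{equation*}
    \left(\sqrt{p\mathcal{O}_K}\right)\mathcal{O}_L = \prod_{\mathfrak{P}\mid p} \mathfrak{P}^{\,e_L/e_K} = \left(\sqrt{p\mathcal{O}_L}\right)^{e_L/e_K}.
\end{equation*}

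Taking classes in $Cl(L)$ then yields $\varepsilon_K^L\left(\overline{\sqrt{p\mathcal{O}_K}}\right) = \left(\overline{\sqrt{p\mathcal{O}_L}}\right)^{e_L/e_K}$, which is a power of a generator of $Po(L)$ and hence lies in $Po(L)$. Since this holds for every rational prime $p$ and the corresponding classes generate $Po(K)$, linearity of $\varepsilon_K^L$ gives $\varepsilon_K^L(Po(K)) \subseteq Po(L)$, as desired.

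The main obstacle I anticipate is justifying cleanly that $e_L/e_K$ is an integer and that the ramification indices are genuinely constant over the primes above $p$; both rest on the Galois hypotheses (the Galois group of $L/\Q$, respectively $K/\Q$, acts transitively on the primes above $p$, forcing the decomposition data to be uniform), and on multiplicativity of ramification indices in the tower $\Q \subseteq K \subseteq L$. Everything else is bookkeeping about how the primes above $p$ in $L$ distribute over those in $K$, which the Galois structure makes transparent.
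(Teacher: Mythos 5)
Your proof is correct; the paper itself states this proposition without proof (deferring to the references), but the key identity you establish, $\sqrt{p\mathcal{O}_K}\,\mathcal{O}_L=\bigl(\sqrt{p\mathcal{O}_L}\bigr)^{e_L/e_K}$, is exactly the one the paper uses (with $e_L/e_K=m/e_i$) in its proof of Corollary \ref{1.4}, so your argument is the standard one and consistent with the paper's approach. The points you flag as obstacles are indeed the only things to check, and both follow as you say from transitivity of the Galois action on primes above $p$ and multiplicativity of ramification indices in the tower $\Q\subseteq K\subseteq L$.
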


For the next two part, we will need some results, directly deduced from Abhyankar's Lemma. In \cite{sti} 3.9.1, the Theorem is proved for function fields over finite fields but the proof remains valid for number fields.
\begin{lem}[Abhyankar] Let $K_1$ and $K_2$ two number fields, $L=K_1K_2$ and $K$ a subfield of $K_1\cap K_2$. Let $\mathfrak{q}$ a prime of $L$ such that $\mathfrak{q}\cap K=\mathfrak{p}$. Let $p\Z=\mathfrak{q}\cap \Z$, $\mathfrak{p}_1=\mathfrak{q}\cap K_1$ and $\mathfrak{p}_2=\mathfrak{q}\cap K_2$. If $p$ does not divide one of the ramification indices $e(\mathfrak{p}_1/\mathfrak{p})$ or  $e(\mathfrak{p}_2/\mathfrak{p})$, then $e(\mathfrak{q}/\p)=\text{lcm}(e(\mathfrak{p}_1/\mathfrak{p}),e(\mathfrak{p}_2/\mathfrak{p}))$.
    
\end{lem}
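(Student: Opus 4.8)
The plan is to reduce the statement to a purely local assertion and then exploit the structure of tamely ramified extensions of local fields. First I would pass to completions: let $F$, $F_1$, $F_2$, $F'$ denote the completions of $K$, $K_1$, $K_2$, $L$ at $\mathfrak{p}$, $\mathfrak{p}_1$, $\mathfrak{p}_2$, $\mathfrak{q}$ respectively. Completion preserves all the ramification indices, and inside $F'$ one has $F'=F_1F_2$, since the compositum of the two complete fields is already complete. Writing $e_i=e(\mathfrak{p}_i/\mathfrak{p})$ and $e'=e(\mathfrak{q}/\mathfrak{p})$, it therefore suffices to prove $e'=\mathrm{lcm}(e_1,e_2)$ for the tower of local fields $F\subseteq F_i\subseteq F'$ of residue characteristic $p$.

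One divisibility is immediate and needs no hypothesis: since $F\subseteq F_i\subseteq F'$ and ramification indices are multiplicative in towers, $e_i\mid e'$ for $i=1,2$, whence $\mathrm{lcm}(e_1,e_2)\mid e'$. The whole content of the lemma is the reverse divisibility $e'\mid \mathrm{lcm}(e_1,e_2)$, and this is exactly where tameness enters. Assume, without loss of generality, $p\nmid e_1$. I would first strip off the unramified part: replacing the base $F$ by the maximal unramified subextension of $F_1/F$ (equivalently base-changing along it) leaves every ramification index and the hypothesis $p\nmid e_1$ unchanged, so I may assume $F_1/F$ is totally and tamely ramified of degree $e_1$. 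Adjoining a primitive $e_1$-th root of unity is an unramified operation, as $p\nmid e_1$, hence preserves all ramification indices, so I may also assume $\zeta_{e_1}\in F$. By the structure theory of tame extensions (Kummer theory), a totally tamely ramified extension of degree $e_1$ then has the shape $F_1=F(\pi^{1/e_1})$ for a uniformizer $\pi$ of $F$.

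Now $F'=F_2(\pi^{1/e_1})$, and I would compute $e(F'/F_2)$ directly. Normalizing the valuation $v$ on $F_2$, one has $v(\pi)=e_2$, so adjoining $\pi^{1/e_1}$ enlarges the value group from $\Z$ by the class of $e_2/e_1$; the index of $\Z$ in $\Z+\tfrac{e_2}{e_1}\Z$ equals $e_1/\gcd(e_1,e_2)$. Because $p\nmid e_1$ the extension $F'/F_2$ is tame, so the full degree is accounted for by ramification and $e(F'/F_2)=e_1/\gcd(e_1,e_2)$. Multiplying by $e_2=e(F_2/F)$ gives
\[
e'=e(F'/F_2)\,e(F_2/F)=\frac{e_1}{\gcd(e_1,e_2)}\,e_2=\mathrm{lcm}(e_1,e_2),
\]
which, together with the easy divisibility, yields the claimed equality.

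I expect the last computation to be the main obstacle. The value-group count only gives $e_1/\gcd(e_1,e_2)$ as a \emph{lower} bound for $e(F'/F_2)$; the matching upper bound is precisely what the tameness hypothesis secures, and in the wild case ($p\mid e_1$) adjoining $\pi^{1/e_1}$ could genuinely raise the degree of $F'/F_2$ through an extra wild contribution, so no such clean formula would hold. The reduction to the totally ramified case and the bookkeeping of the residue extensions and of the unit relating $\pi$ to a power of a uniformizer of $F_2$ also require some care, but these are routine once the tame Kummer model $F_1=F(\pi^{1/e_1})$ is in place.
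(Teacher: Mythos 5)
Your overall strategy --- complete, strip off unramified parts, reduce to the tame Kummer model $F_1=F(\pi^{1/e_1})$, and compute $e(F'/F_2)$ --- is the standard textbook route; the paper itself gives no proof (it cites Stichtenoth 3.9.1 and notes the argument transfers to number fields), so this is the right thing to attempt, and everything up to and including the lower bound $e_1/\gcd(e_1,e_2)\le e(F'/F_2)$ is fine. The gap is exactly where you suspected it: the upper bound. The sentence ``$F'/F_2$ is tame, so the full degree is accounted for by ramification'' is a non sequitur --- tame does not mean totally ramified, and a tame extension can have a nontrivial residue extension. Concretely, take $p$ odd, $u$ a non-square unit of $\Q_p$, $F=\Q_p$, $F_1=\Q_p(\sqrt{pu})$, $F_2=\Q_p(\sqrt{p})$, $\pi=pu$. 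Then $F'=F_2(\sqrt{pu})=\Q_p(\sqrt{p},\sqrt{u})$ has $[F':F_2]=2$ but $e(F'/F_2)=1$, since $F_2(\sqrt{u})/F_2$ is unramified. The lemma's conclusion survives ($e_1/\gcd(e_1,e_2)=1$ here), but your argument, which in effect asserts $e(F'/F_2)=[F':F_2]=e_1/\gcd(e_1,e_2)$, is false on both counts in this example: $[F_2(\pi^{1/e_1}):F_2]$ is the order of $\pi$ in $F_2^\times/(F_2^\times)^{e_1}$ and can strictly exceed $e_1/\gcd(e_1,e_2)$ because of the unit part of $\pi$ relative to $F_2$.

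The missing idea is to split off that unit part explicitly. Write $\pi=\pi_2^{e_2}u$ with $\pi_2$ a uniformizer of $F_2$ and $u$ a unit, set $d=\gcd(e_1,e_2)$, $a=e_1/d$, $b=e_2/d$, and put $\gamma=\alpha^{a}\pi_2^{-b}$ where $\alpha=\pi^{1/e_1}$. Then $\gamma^{d}=\alpha^{e_1}\pi_2^{-e_2}=u$, and since $p\nmid d$ the polynomial $X^{d}-u$ has unit discriminant, so $F_2(\gamma)/F_2$ is \emph{unramified}. Moreover $F'=F_2(\alpha)=F_2(\gamma)(\alpha)$ and $\alpha$ satisfies $X^{a}-\gamma\pi_2^{b}$ over $F_2(\gamma)$, so $e(F'/F_2)=e\bigl(F'/F_2(\gamma)\bigr)\le[F':F_2(\gamma)]\le a=e_1/\gcd(e_1,e_2)$. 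Combined with your value-group lower bound this closes the proof. With this one step added, your argument is correct and is essentially the proof the paper points to in Stichtenoth.
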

\begin{lem}\label{l1.5}
    Let $K/\Q$ a Galois extension, then $|Po(K)^{[K:\Q]}|=\{0\}$.
\end{lem}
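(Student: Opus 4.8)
The plan is to show that each generator of $Po(K)$ is annihilated by $n:=[K:\Q]$; since $Po(K)$ is abelian this immediately yields $Po(K)^{[K:\Q]}=\{0\}$. Recall from the discussion preceding the statement that $Po(K)$ is generated by the classes of the ambiguous ideals $\sqrt{p\mathcal{O}_K}=\prod_{\mathfrak{p}|p}\mathfrak{p}$, so it suffices to prove that $\overline{\sqrt{p\mathcal{O}_K}}^{\,n}=0$ in $Cl(K)$ for every prime number $p$.

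First I would invoke the Galois hypothesis: the group $G$ acts transitively on the primes $\mathfrak{p}$ above $p$, so all of them share a common ramification index $e_p$ and inertia degree $f_p$, and the fundamental identity reads $n=e_p f_p g_p$, where $g_p$ is the number of primes above $p$. In particular $e_p\mid n$. Writing out the factorization of $p$ then gives
\begin{equation*}
p\mathcal{O}_K=\prod_{\mathfrak{p}|p}\mathfrak{p}^{e_p}=\Big(\prod_{\mathfrak{p}|p}\mathfrak{p}\Big)^{e_p}=\big(\sqrt{p\mathcal{O}_K}\big)^{e_p}.
\end{equation*}

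Next, raising this equality to the power $n/e_p$, which is an integer by the divisibility just established, yields
\begin{equation*}
\big(\sqrt{p\mathcal{O}_K}\big)^{n}=(p\mathcal{O}_K)^{\,n/e_p},
\end{equation*}
a principal ideal. Passing to ideal classes this says $\overline{\sqrt{p\mathcal{O}_K}}^{\,n}=0$, i.e. every generator of $Po(K)$ is killed by $n$. As $Po(K)$ is a subgroup of the abelian group $Cl(K)$ generated precisely by these classes, I conclude $Po(K)^{[K:\Q]}=\{0\}$.

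There is essentially no serious obstacle here: the only point genuinely requiring the Galois hypothesis is the equality of the ramification indices of the primes above $p$, which is what makes $\sqrt{p\mathcal{O}_K}$ an honest $e_p$-th root of the principal ideal $p\mathcal{O}_K$. Once that is secured, the divisibility $e_p\mid[K:\Q]$ coming from $n=e_p f_p g_p$ finishes the argument, and no estimate or deeper structural input about $Cl(K)$ is needed.
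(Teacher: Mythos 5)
Your argument is correct and is essentially the paper's own proof, just written out in more detail: both rest on the identity $p\mathcal{O}_K=\big(\sqrt{p\mathcal{O}_K}\big)^{e_p}$ together with the divisibility $e_p\mid[K:\Q]$ coming from the Galois hypothesis, so that $\sqrt{p\mathcal{O}_K}^{[K:\Q]}$ is a power of a principal ideal. The paper merely states this in one line, whereas you make the factorization and the passage to ideal classes explicit.
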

\begin{proof}
    Let $p$ a prime number and $e_p(K/\Q)$ the ramification index of each prime $\mathfrak{P}$ of $K$ lying over $p$. As $e_p(K/\Q)|[K:\Q]$ we obtain that $\sqrt{p\mathcal{O}_K}^{[K:\Q]}$ is principal.
\end{proof}
For the convenience of the reader we give a detailed proof of the next two results.
\begin{coro}\label{1.4}
 Let $K_1$ and $K_2$ two  finite Galois extensions of $\Q$, and $L=K_1K_2$. If for each prime number p, at least one extension
$Ki/\Q$ ($i=1,2$) is tamely ramified with respect to $p$, then:
 \begin{equation*}
        Po(L)=\varepsilon_{K_1}^L(Po(K_1))\cdot \varepsilon_{K_2}^L(Po(K_2)).
            \end{equation*}
\end{coro}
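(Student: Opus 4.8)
The plan is to prove the two inclusions separately, the containment $\supseteq$ being essentially free and the reverse one carrying all the content. For $\supseteq$, I would simply apply the Proposition ($\varepsilon_K^L(Po(K))\subseteq Po(L)$) to each of the Galois towers $K_i\subseteq L$, giving $\varepsilon_{K_i}^L(Po(K_i))\subseteq Po(L)$ for $i=1,2$; since $Cl(L)$ is abelian the product of these two subgroups is again a subgroup of $Po(L)$. Thus the whole problem reduces to the reverse inclusion, and because $Po(L)$ is generated by the classes $\overline{\sqrt{p\mathcal{O}_L}}$ as $p$ ranges over primes, it suffices to show that each such generator lies in $\varepsilon_{K_1}^L(Po(K_1))\cdot\varepsilon_{K_2}^L(Po(K_2))$.

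So I would fix a prime $p$ and write $e_i=e_p(K_i/\Q)$ and $e=e_p(L/\Q)$. First I would record a ramification identity: since $L/\Q$ is Galois, for every prime $\p_i$ of $K_i$ above $p$ one has $e(\mathfrak{P}/\p_i)=e/e_i$ \emph{uniformly} over the primes $\mathfrak{P}$ of $L$ above $p$ (multiplicativity of ramification indices in the tower, together with the transitivity of the Galois action making all these indices equal). Hence
\[
\sqrt{p\mathcal{O}_{K_i}}\,\mathcal{O}_L=\prod_{\p_i\mid p}\ \prod_{\mathfrak{P}\mid\p_i}\mathfrak{P}^{\,e/e_i}=\Big(\prod_{\mathfrak{P}\mid p}\mathfrak{P}\Big)^{e/e_i}=\sqrt{p\mathcal{O}_L}^{\,e/e_i},
\]
which, passed to classes, reads $\varepsilon_{K_i}^L\big(\overline{\sqrt{p\mathcal{O}_{K_i}}}\big)=\overline{\sqrt{p\mathcal{O}_L}}^{\,e/e_i}$.

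Now the tameness hypothesis enters. For the chosen $p$, at least one $e_i$ is prime to $p$, so Abhyankar's Lemma (with $K=\Q$) applies and yields $e=\text{lcm}(e_1,e_2)$. The arithmetic point is then that $\gcd(e/e_1,e/e_2)=1$: checking one prime $\ell$ at a time, if $\ell^{a}\,\|\,e_1$ and $\ell^{b}\,\|\,e_2$ then $\ell^{\max(a,b)}\,\|\,e$, so one of the exponents $\max(a,b)-a$, $\max(a,b)-b$ vanishes and $\ell\nmid\gcd(e/e_1,e/e_2)$. Choosing $u,v\in\Z$ with $u\,(e/e_1)+v\,(e/e_2)=1$ by Bézout, I conclude
\[
\overline{\sqrt{p\mathcal{O}_L}}=\Big(\overline{\sqrt{p\mathcal{O}_L}}^{\,e/e_1}\Big)^{u}\Big(\overline{\sqrt{p\mathcal{O}_L}}^{\,e/e_2}\Big)^{v}=\varepsilon_{K_1}^L\big(\overline{\sqrt{p\mathcal{O}_{K_1}}}\big)^{u}\,\varepsilon_{K_2}^L\big(\overline{\sqrt{p\mathcal{O}_{K_2}}}\big)^{v},
\]
which lies in $\varepsilon_{K_1}^L(Po(K_1))\cdot\varepsilon_{K_2}^L(Po(K_2))$, as the two factors are images of generators of $Po(K_1)$ and $Po(K_2)$. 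This gives the reverse inclusion and hence the equality.

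I expect the only real obstacle to be the interface between ramification theory and the class-group bookkeeping: pinning down $e=\text{lcm}(e_1,e_2)$ through Abhyankar's Lemma is exactly where the tame-ramification hypothesis is indispensable (without it one only obtains $\text{lcm}(e_1,e_2)\mid e$, and the gcd computation may fail), and then converting the relation $\gcd(e/e_1,e/e_2)=1$ into the Bézout identity that recovers $\overline{\sqrt{p\mathcal{O}_L}}$ from the two images. Everything else — the uniformity of the ramification indices and the fact that a product of subgroups in an abelian group is a subgroup — is formal.
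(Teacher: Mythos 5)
Your proof is correct and follows essentially the same route as the paper's: Abhyankar's Lemma (enabled by the tameness hypothesis) gives $e_p(L/\Q)=\mathrm{lcm}(e_1,e_2)$, the identity $\sqrt{p\mathcal{O}_{K_i}}\mathcal{O}_L=\sqrt{p\mathcal{O}_L}^{\,e/e_i}$ reduces everything to the coprimality of $e/e_1$ and $e/e_2$, and B\'ezout recovers $\sqrt{p\mathcal{O}_L}$. You merely spell out a few steps the paper leaves implicit (the easy inclusion via the Proposition, the uniformity of ramification indices, and the gcd computation).
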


\begin{proof}(See also \cite{jlc} Lemma 3.2). 
    Let $p$ a prime number that ramifies in $L$. Let $m=e_p(L/\Q)$, $e_1=e_p(K_1/\Q)$ and $e_2=e_p(K_2/\Q)$, the ramification indices of primes lying over $p$ in the different extensions. By Abhyankar's Lemma, $m=\text{lcm}(e_1,e_2).$ Then we have:
    \begin{equation*}
        \sqrt{p\mathcal{O}_{K_1}}\mathcal{O}_L=\sqrt{p\mathcal{O}_L}^{\frac{m}{e_1}}\;\text{and}\;\sqrt{p\mathcal{O}_{K_2}}\mathcal{O}_L=\sqrt{p\mathcal{O}_L}^{\frac{m}{e_2}}.
    \end{equation*}
But $\frac{m}{e_1}$ and $\frac{m}{e_2}$ are coprime, thus there exist $u,v\in\Z$ such that $u\frac{m}{e_1}+v\frac{m}{e_2}=1$. Therefore
\begin{equation*}
    \sqrt{p\mathcal{O}_L}=\sqrt{p\mathcal{O}_L}^{u\frac{m}{e_1}}\sqrt{p\mathcal{O}_L}^{v\frac{m}{e_2}}=(\sqrt{p\mathcal{O}_{K_1}}\mathcal{O}_L)^u(\sqrt{p\mathcal{O}_{K_2}}\mathcal{O}_L)^v.
\end{equation*}
We then deduce 
\begin{equation*}
    \langle \sqrt{p\mathcal{O}_{K_1}}\mathcal{O}_L,\sqrt{p\mathcal{O}_{K_2}}\mathcal{O}_L\rangle=\langle \sqrt{p\mathcal{O}_L}\rangle\; .\end{equation*}

\end{proof}

\begin{coro}\label{1.7}
     Let $K_1$ and $K_2$ two  finite Galois extensions of $\Q$ such that the degrees $[K_1:\Q]$ and $[K_2:\Q]$ are coprime, and let $L=K_1K_2$. Then:
     \begin{equation*}
         Po(L)\simeq Po(K_1)\oplus Po(K_2).
     \end{equation*}

\end{coro}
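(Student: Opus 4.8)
The plan is to deduce this from Corollary~\ref{1.4} together with Lemma~\ref{l1.5}, upgrading the product decomposition there into a direct sum. First I would record the structural consequences of the coprimality of degrees: since $[K_1:\Q]$ and $[K_2:\Q]$ are coprime we have $K_1\cap K_2=\Q$, the compositum $L$ is again Galois over $\Q$, and $[L:\Q]=[K_1:\Q]\,[K_2:\Q]$; in particular $[L:K_1]=[K_2:\Q]$ and $[L:K_2]=[K_1:\Q]$.

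Next I would check that the hypothesis of Corollary~\ref{1.4} is automatic here. For any prime $p$ and $i\in\{1,2\}$, the ramification index $e_p(K_i/\Q)$ divides $[K_i:\Q]$ because $K_i/\Q$ is Galois. Hence $e_p(K_1/\Q)$ and $e_p(K_2/\Q)$ are coprime, so $p$ divides at most one of them, and the other extension is tamely ramified at $p$. Applying Corollary~\ref{1.4} then yields
\begin{equation*}
    Po(L)=\varepsilon_{K_1}^L(Po(K_1))\cdot \varepsilon_{K_2}^L(Po(K_2)),
\end{equation*}
where each factor $A_i:=\varepsilon_{K_i}^L(Po(K_i))$ is a subgroup of $Po(L)$ by the inclusion $\varepsilon_{K_i}^L(Po(K_i))\subseteq Po(L)$ established above.

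Now I want to show this product is direct and that each $\varepsilon_{K_i}^L$ is injective on $Po(K_i)$. By Lemma~\ref{l1.5} the exponent of $Po(K_i)$ divides $[K_i:\Q]$, and since $A_i$ is a homomorphic image of $Po(K_i)$ its exponent also divides $[K_i:\Q]$. As $[K_1:\Q]$ and $[K_2:\Q]$ are coprime, the exponents of $A_1$ and $A_2$ are coprime, so the order of any element of $A_1\cap A_2$ divides both and hence equals $1$; thus $A_1\cap A_2=\{1\}$. Combined with the product decomposition this gives the internal direct sum $Po(L)=A_1\oplus A_2$.

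For the injectivity — which I expect to be the crux — I would use the ideal norm $N_{L/K_i}\colon Cl(L)\to Cl(K_i)$. Since $N_{L/K_1}(I\mathcal{O}_L)=I^{[L:K_1]}$ for every ideal $I$ of $K_1$, the composite $N_{L/K_1}\circ\varepsilon_{K_1}^L$ is the power map $x\mapsto x^{[K_2:\Q]}$ on $Cl(K_1)$. Restricted to $Po(K_1)$ this map is an automorphism, because $[K_2:\Q]$ is coprime to the exponent of $Po(K_1)$, which divides $[K_1:\Q]$. Hence $\varepsilon_{K_1}^L(x)=1$ forces $x^{[K_2:\Q]}=1$ and therefore $x=1$, so $\varepsilon_{K_1}^L$ is injective on $Po(K_1)$, and symmetrically for $K_2$. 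Consequently $A_i\cong Po(K_i)$, and the internal direct sum gives $Po(L)\cong Po(K_1)\oplus Po(K_2)$. The main obstacle is precisely this injectivity step: the norm-composition identity together with the coprimality of the degrees is exactly what makes the relevant power map invertible on each P\'olya group.
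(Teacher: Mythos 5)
Your proof is correct and follows essentially the same route as the paper: the product decomposition from Corollary~\ref{1.4}, injectivity of $\varepsilon_{K_i}^L$ on $Po(K_i)$ via the norm map $N_{L/K_i}$ together with Lemma~\ref{l1.5}, and triviality of the intersection from the coprimality of the exponents. The only thing you add that the paper leaves implicit is the check that the tameness hypothesis of Corollary~\ref{1.4} is automatic when the degrees are coprime --- a worthwhile detail, but not a different argument.
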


\begin{proof}(See also \cite{jlc} Theorem 3.7).
    %According to the Lemma \ref{l1.5} if $p||Po(K_i)|$ then $p|[K_i:\Q]$ . Hence
    The map $\varepsilon_{K_i}^L:(Po(K_i))\to Cl(L)$ is injective for $i=1,2$. Indeed, if $I\in \ker(\varepsilon_{K_1}^L)$, then $N_{L/K_1}(I\mathcal{O}_L)=I^{[L:K_1]}=I^{[K_2:\Q]}$ is principal. By Lemma \ref{l1.5} and the fact that the degrees are coprime, $I$ is already principal. So we have $\varepsilon_{K_i}^L(Po(K_i))\simeq Po(K_i).$   
     We already have the sum decomposition according to the previous Corollary. It remains to show that the intersection $\varepsilon_{K_1}^L(Po(K_1))\cap\varepsilon_{K_2}^L(Po(K_2))$ is trivial in $Po(L)$.

By Lemma \ref{l1.5} if $p$ divides $|Po(K_i)|$ then it divides $[K_i:K]$. Consequently, as the degrees $[K_1:\Q]$ and $[K_2:\Q]$ are coprime, we conclude that the sum is direct:
    \begin{equation*}
        Po(L)\simeq Po(K_1)\oplus Po(K_2).
    \end{equation*}
\end{proof}

\section{On P\'olya fields}

\begin{prop}\label{2.1}
    Let $l$ a prime number and $n\in\N^*$. There exist infinitely many cyclic extensions $K/\Q$ of degree $l^n$ sutch that $Po(K)=\{0\}.$
\end{prop}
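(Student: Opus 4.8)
The plan is to realise each desired field as a subfield of a prime cyclotomic field and to kill its Pólya group with Chevalley's ambiguous class number formula. Fix a prime $p\equiv 1\pmod{l^n}$ and let $K_p$ be the unique subfield of $\Q(\zeta_p)$ with $[K_p:\Q]=l^n$; since $\Gal(\Q(\zeta_p)/\Q)\cong(\Z/p\Z)^\times$ is cyclic, $K_p/\Q$ is cyclic of degree $l^n$. Only $p$ ramifies in $\Q(\zeta_p)$, and totally, so $p$ is the unique ramified prime of $K_p$ and is totally ramified; let $\mathfrak{p}$ be the unique prime of $K_p$ above $p$, so $\mathfrak{p}^{l^n}=(p)$. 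Because $\sqrt{q\mathcal{O}_{K_p}}=(q)$ is principal for every unramified prime $q$, the group $Po(K_p)$ is generated by $[\mathfrak{p}]$; and as $\mathfrak{p}$ is $G$-invariant (with $G=\Gal(K_p/\Q)$) this class lies in $Cl(K_p)^{G}$. Hence it suffices to show $Cl(K_p)^{G}=\{0\}$. Distinct primes $p$ yield distinct fields, since $K_p$ is ramified exactly at $p$, so this produces infinitely many fields.

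To compute the order of $Cl(K_p)^{G}$ I would apply Chevalley's formula to the cyclic extension $K_p/\Q$:
\begin{equation*}
|Cl(K_p)^{G}|=\frac{\prod_v e_v}{[K_p:\Q]\cdot[\{\pm1\}:\{\pm1\}\cap N_{K_p/\Q}(K_p^\times)]},
\end{equation*}
where $v$ runs over the places of $\Q$ and $e_v$ is the ramification index in $K_p$. The finite places contribute exactly $e_p=l^n$, so only the archimedean place and the norm index remain to be understood.

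For $l$ odd the degree $l^n$ is odd, so $K_p$ is totally real and the archimedean place is unramified; thus $\prod_v e_v=l^n$. Moreover $N_{K_p/\Q}(-1)=(-1)^{l^n}=-1$, so $-1$ is a norm and the unit index is $1$. The formula gives $|Cl(K_p)^{G}|=l^n/l^n=1$, hence $Po(K_p)=\{0\}$; infinitely many admissible $p$ exist by Dirichlet's theorem.

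The delicate case, and the main obstacle, is $l=2$, where $-1$ need not be a norm and the archimedean place must be tracked. Here I would impose the sharper congruence $v_2(p-1)=n$, i.e. $p\equiv 1+2^n\pmod{2^{n+1}}$, still realised by infinitely many primes via Dirichlet. A short computation in the cyclic group $(\Z/p\Z)^\times$ shows that $-1$ is then not a $2^n$-th power, so complex conjugation acts nontrivially on $K_p$ and $K_p$ is totally imaginary. Consequently the archimedean place ramifies and $\prod_v e_v=2^{n+1}$; and since every nonzero norm from a totally imaginary field is positive, $-1\notin N_{K_p/\Q}(K_p^\times)$, making the unit index equal to $2$. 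The two extra factors cancel, $|Cl(K_p)^{G}|=2^{n+1}/(2^n\cdot2)=1$, and $K_p$ is again a Pólya field. The whole argument is automatic for $l$ odd; the only genuine bookkeeping is this archimedean–norm balancing in the case $l=2$.
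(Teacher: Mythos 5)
Your proof is correct, and while it constructs exactly the same fields as the paper (the degree-$l^n$ subfield $K$ of $\Q(\zeta_p)$ for $p\equiv 1\bmod l^n$, with Dirichlet supplying infinitely many $p$), the step that kills the Pólya group is genuinely different. The paper cites Washington's Theorem 10.4 to get $l\nmid h_K$ and combines this with Lemma \ref{l1.5}, which shows $Po(K)$ is annihilated by $l^n$; an $l$-group inside a group of order prime to $l$ is trivial. You instead note that $Po(K)$ is generated by the class of the unique ramified prime, hence lies in $Cl(K)^{G}$, and you annihilate the whole ambiguous class group via Chevalley's formula. Your handling of $l=2$ is the real added value: by sharpening the congruence to $v_2(p-1)=n$ you force $K$ to be totally imaginary, so the archimedean ramification index $2$ and the norm index $[\{\pm1\}:\{\pm1\}\cap N_{K/\Q}(K^\times)]=2$ cancel exactly; this is a genuine point, because for $v_2(p-1)>n$ the field can be real and deciding whether $-1$ is a norm would require a local computation at $p$. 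What the citation of Washington buys the paper is brevity and the conclusion $l\nmid h_K$ for every $p\equiv 1\bmod l^n$; what your route buys is independence from that theorem and from Lemma \ref{l1.5}, plus the slightly stronger conclusion $Cl(K)^{G}=\{0\}$, at the cost of restricting the admissible primes when $l=2$ (which is harmless, since Dirichlet still provides infinitely many).
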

\begin{proof}
    Let $p$ a prime number verifying $p\equiv 1 \mod l^n$. Let $K$ the subfield of $\Q(\zeta_p)$ of degree $l^n$ over $\Q$. By \cite{wash}, Theorem 10.4, $|Cl(K)|\not\equiv 0\mod l$, and so, by Lemma \ref{l1.5}, $Po(K)=\{0\}$. We conclude with Dirichelt’s Theorem on arithmetic progressions.
\end{proof}

\begin{coro}
    Let $G$ an abelian group. There exist infinitely many P\'olya fields, abelian over $\Q$, tamely ramified, such that $Gal(K/\Q)\simeq G.$
\end{coro}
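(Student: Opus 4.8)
The plan is to realize $G$ as the Galois group of a compositum of the cyclic Pólya fields produced by Proposition \ref{2.1}, chosen so that they ramify at disjoint sets of primes. First I would invoke the structure theorem for finite abelian groups to write
\begin{equation*}
    G \simeq \bigoplus_{i=1}^r \Z/l_i^{n_i}\Z,
\end{equation*}
where the $l_i$ are primes (not necessarily distinct) and $n_i\in\N^*$. For each index $i$, Proposition \ref{2.1} furnishes infinitely many cyclic extensions of $\Q$ of degree $l_i^{n_i}$ with trivial Pólya group; concretely, each such $K_i$ is the subfield of degree $l_i^{n_i}$ of some $\Q(\zeta_{p_i})$ with $p_i\equiv 1 \pmod{l_i^{n_i}}$ prime. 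Since there are infinitely many admissible primes for each factor, I would choose the $p_i$ pairwise distinct. Each $K_i$ is then ramified only at $p_i$, and this ramification is tame because $p_i\equiv 1\pmod{l_i}$ forces $p_i\neq l_i$, so that $p_i\nmid e_{p_i}(K_i/\Q)=l_i^{n_i}$.

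Set $L=K_1\cdots K_r$. The first point to check is that $\Gal(L/\Q)\simeq G$. Because $K_i\subseteq \Q(\zeta_{p_i})$ and the $p_i$ are distinct primes, the fields $\Q(\zeta_{p_i})$ are linearly disjoint over $\Q$ (their compositum $\Q(\zeta_{p_1\cdots p_r})$ has Galois group $\prod_i(\Z/p_i\Z)^\times$), whence the $K_i$ are linearly disjoint and
\begin{equation*}
    \Gal(L/\Q)\simeq \prod_{i=1}^r \Gal(K_i/\Q)\simeq \bigoplus_{i=1}^r \Z/l_i^{n_i}\Z\simeq G.
\end{equation*}
The field $L$ is abelian over $\Q$, being a compositum of abelian extensions, and it is tamely ramified: the ramified primes are exactly $p_1,\dots,p_r$, and at each $p_i$ only the factor $K_i$ ramifies, so by Abhyankar's Lemma $e_{p_i}(L/\Q)=l_i^{n_i}$ is coprime to $p_i$.

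Finally I would show $Po(L)=\{0\}$ by building $L$ one factor at a time and applying Corollary \ref{1.4} repeatedly. Put $L_1=K_1$ and $L_j=L_{j-1}K_j$ for $j\geq 2$. Since every $K_i$, hence every $L_{j-1}$, is tamely ramified at all primes, the hypothesis of Corollary \ref{1.4} holds for the pair $(L_{j-1},K_j)$, giving
\begin{equation*}
    Po(L_j)=\varepsilon_{L_{j-1}}^{L_j}\big(Po(L_{j-1})\big)\cdot \varepsilon_{K_j}^{L_j}\big(Po(K_j)\big).
\end{equation*}
As $Po(K_j)=\{0\}$ for every $j$, an immediate induction from $Po(L_1)=Po(K_1)=\{0\}$ yields $Po(L)=Po(L_r)=\{0\}$, so $L$ is a Pólya field. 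Infinitely many pairwise non-isomorphic such fields arise by varying the tuple $(p_1,\dots,p_r)$ among the infinitely many admissible choices (for instance by letting the largest of the $p_i$ grow). The step I expect to require the most care is the identification $\Gal(L/\Q)\simeq G$, that is, the linear disjointness of the $K_i$: precisely because the primes $l_i$ may repeat, Corollary \ref{1.7} cannot be used to assemble the factors, and one must instead rely on the disjointness of the ramification loci together with the cyclotomic description of the $K_i$.
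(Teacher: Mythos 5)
Your proposal is correct and follows essentially the same route as the paper: decompose $G$ into cyclic factors of prime-power order, realize each by Proposition \ref{2.1} as a cyclic Pólya field inside a cyclotomic field, and assemble the compositum with repeated applications of Corollary \ref{1.4}. The only difference is organizational (the paper first groups the factors into Sylow subgroups), and you make explicit a point the paper leaves implicit, namely the choice of pairwise distinct auxiliary primes $p_i$ to guarantee linear disjointness and hence $\Gal(L/\Q)\simeq G$.
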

\begin{proof}
As $G$ is abelian, it is the direct product of its $p$-Sylow subgroups: $G=H_1\times\dots \times H_r$. We start to show that each $p_i$-Sylow subgroup $H_i$ of $G$ is the Galois group of a certain P\'olya field. Let $H_i=\Z/p_i^{e_1}\Z\times\dots\times \Z/p_i^{e_n}\Z$, by Proposition \ref{2.1}, for all $1\le j\le n$, there exist infinitely many cyclic P\'olya fields of degree $p_i^{e_j}$. Consequently, by Corollary \ref{1.4}, there exist infinitely many P\'olya fields, tamely ramified with Galois group $H_i$. Let $K_i$ such a field for $1\le i\le r$. When $i\ne j$, $[K_i:\Q]$ and $[K_j:\Q]$ are coprime and so, by Corollary \ref{1.4} applied to $K_1,\dots,K_r$, we have
\begin{equation*}
    Po(L)=\varepsilon_{K_1}^L(Po(K_1))\cdot\dots \cdot\varepsilon_{K_r}^L (Po(K_r))=\{1\}
\end{equation*}
where $L$ is the compositum $L=K_1\dots K_r$.

\end{proof}

\section{Yagahi's Theorem}
\begin{thm}[\cite{ya}]
    Let $l$ a prime number and $G$ an abelian group of exponent $l^m$, $m\ge 0$. Then for all $n\ge m$, $n\ge 1$, there exist infinitely many cyclic extensions of $\Q$ of degree $l^n$ whose $l$-class groups are isomorphic to $G$.
\end{thm}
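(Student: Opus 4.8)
The plan is to realize each such field $K$ as a cyclic degree-$l^n$ subfield of a suitable cyclotomic field. Concretely, I would choose distinct primes $p_1,\dots,p_s$ all congruent to $1$ modulo $l^n$; for each $p_i$ the field $\Q(\zeta_{p_i})$ then admits a unique cyclic subfield of degree $l^n$, in which $p_i$ is totally ramified and every other rational prime is unramified. Working inside the compositum $\Q(\zeta_{p_1\cdots p_s})$, whose Galois group is $\prod_i(\Z/p_i\Z)^\ast$, one can cut out a single cyclic quotient of order $l^n$ and let $K$ be its fixed field, arranging that a prescribed subset of the $p_i$ ramifies while the rest split in a controlled way. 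The object of study is then the $l$-part $Cl(K)_l$ of the class group, which carries the structure of a module over $\Z_l[\Gal(K/\Q)]=\Z_l[\Z/l^n\Z]$; the goal is to force this abelian group to be isomorphic to $G$.

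The first quantitative input is Chevalley's ambiguous class number formula. For $K/\Q$ cyclic it gives
\begin{equation*}
    \bigl|Cl(K)^{\Gal(K/\Q)}\bigr|=\frac{\prod_{p}e_p(K/\Q)}{[K:\Q]\cdot(E_\Q:E_\Q\cap N_{K/\Q}K^\ast)},
\end{equation*}
the product running over the ramified places, so that the order of the ambiguous class group --- equivalently, by genus theory, the degree over $K$ of its genus field --- is governed by the number of ramified primes $p_i$. This determines the order of the coinvariant module $Cl(K)_l/(\sigma-1)Cl(K)_l$, where $\sigma$ generates $\Gal(K/\Q)$, and hence the minimal number of generators of $Cl(K)_l$ over $\Z_l[\Gal(K/\Q)]$. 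Note that the classes of the ramified primes generate this genus group and, since each $p_i$ is totally ramified, they are annihilated by $l^n$; this is exactly the phenomenon recorded in Lemma \ref{l1.5}, and it forces the constraint $m\le n$, because a genus-theoretic class can have order at most $l^n=[K:\Q]$.

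The heart of the argument, and the step I expect to be the main obstacle, is to pass from the mere \emph{rank} information supplied by genus theory to the full invariant-factor decomposition $G\simeq\bigoplus_i\Z/l^{a_i}\Z$. The idea is that the finer structure of $Cl(K)_l$ --- the filtration by the powers $(\sigma-1)^k$, or equivalently the successive genus fields of the tower --- is encoded in a R\'edei-type linking matrix whose entries are $l^k$-th power residue symbols $\left(\tfrac{p_i}{p_j}\right)_{l^k}$ between the ramified primes. By Chebotarev's density theorem applied in the relevant Kummer extensions, these symbols can be prescribed essentially independently: one selects the $p_i$ step by step so that the linking matrix has exactly the elementary divisors dictated by $G$. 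The delicate points are to show that the symbols really are free to prescribe (no hidden reciprocity obstruction couples them), that the resulting matrix computes the group structure and not merely its rank, and --- crucially --- that no \emph{extra} $l$-class group appears beyond the part produced by this construction, so that $Cl(K)_l$ is exactly $G$ rather than a quotient or an extension of it.

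Finally, infinitude and distinctness are immediate from the method: each Chebotarev condition leaves infinitely many admissible choices of the primes $p_i$, and the resulting fields are pairwise non-isomorphic because their discriminants are divisible by different sets of ramified primes. Collecting these, for every $n\ge m$ with $n\ge 1$ one obtains infinitely many cyclic extensions of $\Q$ of degree $l^n$ whose $l$-class group is isomorphic to $G$, which is the assertion.
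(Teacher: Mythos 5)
Your proposal is an outline rather than a proof, and the step you yourself flag as ``the main obstacle'' is precisely the one that cannot be waved through. Chevalley's ambiguous class number formula controls only the coinvariants $Cl(K)_l/(\sigma-1)Cl(K)_l$, hence (by Nakayama) the minimal number of generators of $Cl(K)_l$ as a $\Z_l[\Gal(K/\Q)]$-module; it says nothing about the order or the abelian-group structure of $Cl(K)_l$ itself, which can be vastly larger than its coinvariant quotient --- already a single cyclic $\Z_l[\Z/l^n\Z]$-module can have arbitrarily large $l$-rank as an abelian group. The R\'edei-type linking matrix you invoke to recover the invariant factors is a worked-out theory only for $l=2$, $n=1$ (R\'edei--Reichardt); for general $l^n$ the higher power residue symbols $\left(\tfrac{p_i}{p_j}\right)_{l^k}$ are coupled by $l$-th power reciprocity, so they are not freely prescribable, and there is no clean statement that such a matrix computes the group structure rather than one graded piece of the $(\sigma-1)$-filtration. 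Most seriously, you give no mechanism for the final claim that no \emph{extra} $l$-class group appears, i.e.\ that $Cl(K)_l$ is exactly $G$ rather than a proper extension of it; without that the theorem is not proved.

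For comparison: the paper does not prove this statement either (it is quoted from Yahagi), but the sketch it gives of the refined P\'olya-group version shows where the real work lies. Yahagi proceeds top-down: he first constructs an abelian $l$-extension $K/\Q$ and a cyclic subfield $F$ arranged so that $K$ is exactly the maximal abelian $l$-extension of $\Q$ contained in the genus field of $F/\Q$ and $G(K/F)\simeq G$, and then proves that \emph{every} $l$-ideal class of $F$ is strongly ambiguous ($S_F={S_F^{(\tau)}}_s$) by showing that the map $\varphi:S_F^{(\tau)}\to S_F/S_F^{1-\tau}$ already carries the strongly ambiguous classes onto $S_F/S_F^{1-\tau}$. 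That last step is exactly the ``no extra class group'' assertion your sketch is missing, and it is obtained by class field theory applied to the genus field, not by a linking-matrix computation.
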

In fact, in his paper, Yahagi shows refined result, especially concerning P\'olya groups. We give here a sketch of the proof for the sake of keeping this paper self-contained.
\begin{thm}
    Let $l$ a prime number and $G$ an abelian group of exponent $l^m$, $m\ge 0$. Then for all $n\ge m$, $n\ge 1$, there exist infinitely many cyclic extensions of $\Q$ of degree $l^n$ whose P\'olya groups are isomorphic to $G$.
\end{thm}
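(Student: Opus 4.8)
The plan is to give $Po(K)$ a cohomological description and then to engineer that cohomology by a careful choice of ramified primes. Writing $G=\Gal(K/\Q)$, I would start from the exact sequences of $G$-modules
\[ 1 \to \mathcal{O}_K^{\times} \to K^{\times} \to \mathcal{P}(K) \to 1, \qquad 1 \to \mathcal{P}(K) \to \mathcal{I}(K) \to Cl(K) \to 1. \]
Taking $G$-invariants and using $H^1(G,K^{\times})=0$ (Hilbert 90) together with $H^1(G,\mathcal{I}(K))=0$ (Shapiro's lemma, $\mathcal{I}(K)$ being a permutation module with finite decomposition groups), and using that $\Q$ is principal, one gets the elementary isomorphism $\mathcal{I}(K)^G/j_{\Q}^K(\mathcal{I}(\Q))\cong\bigoplus_p \Z/e_p\Z$ (with $e_p=e_p(K/\Q)$) and $\mathcal{P}(K)^G/j_{\Q}^K(\mathcal{P}(\Q))\cong H^1(G,\mathcal{O}_K^{\times})$. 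Since $Po(K)$ is the image of $\mathcal{I}(K)^G$ in $Cl(K)$, i.e. $\mathcal{I}(K)^G/\mathcal{P}(K)^G$, these identifications assemble into the fundamental exact sequence
\[ H^1(G,\mathcal{O}_K^{\times}) \xrightarrow{\ \phi\ } \bigoplus_p \Z/e_p\Z \longrightarrow Po(K) \longrightarrow 0, \]
and moreover $Cl(K)^G/Po(K)\cong H^1(G,\mathcal{P}(K))$. In particular $Po(K)$ is a quotient of $\bigoplus_p \Z/e_p\Z$, generated by at most the number of ramified primes and of exponent dividing $[K:\Q]=l^n$ (which reproves Lemma \ref{l1.5}).

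For the construction I would take $l$ odd first, so that $K$ is totally real and no archimedean place ramifies. Write $G\cong \Z/l^{a_1}\Z\oplus\dots\oplus \Z/l^{a_t}\Z$ with $a_1\ge\dots\ge a_t$ and $a_1=m\le n$. The target is a cyclic $K/\Q$ of degree $l^n$ in which exactly $t$ primes $p_1,\dots,p_t$ ramify, each totally; then $\bigoplus_p \Z/e_p\Z=(\Z/l^n\Z)^t$ and the sequence above reads $Po(K)=(\Z/l^n\Z)^t/\operatorname{im}\phi$. Choosing $p_i\equiv 1 \pmod{l^n}$ by Dirichlet's theorem, each $p_i$ admits a totally ramified cyclic $l^n$-extension inside $\Q(\zeta_{p_i})$, and one builds $K$ as the cyclic $l^n$-subextension of the compositum cut out by a suitable character, so that the decomposition behaviour at each $p_i$ is prescribed. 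Genus theory (equivalently Chevalley's ambiguous class number formula) then controls $|Cl(K)^G|$ in terms of $t$ and the unit index, while the preceding theorem on $l$-class groups supplies the target $l$-class group.

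The crux is to force $\operatorname{im}\phi$ to be exactly the subgroup of $(\Z/l^n\Z)^t$ with cokernel $G$; equivalently, to show that for the fields so constructed one has $Po(K)=Cl(K)_l\cong G$. For this I would impose the congruence and splitting conditions on the $p_i$ so that the $l$-Hilbert class field of $K$ is abelian over $\Q$: then $G$ acts trivially on $Cl(K)_l$, so $Cl(K)_l\subseteq Cl(K)^G$, and one checks that the $l$-part of $Cl(K)^G/Po(K)\cong H^1(G,\mathcal{P}(K))$ (controlled, for cyclic $G$, by $\widehat{H}^{2}(G,\mathcal{O}_K^{\times})$) vanishes, giving $Po(K)=Cl(K)_l$. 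This simultaneous control of the unit cohomology $H^1(G,\mathcal{O}_K^{\times})$ and of the valuations of the principal ambiguous ideals at the $p_i$ is the main obstacle, and it is precisely the delicate part of Yahagi's construction. Everything else is then routine: the case $l=2$, where the archimedean place contributes an extra $\Z/2\Z$ factor that must be absorbed by taking $K$ real, and the infinitude of the family, which follows because varying the $p_i$ over the infinitely many admissible primes yields fields with distinct ramification, hence non-isomorphic.
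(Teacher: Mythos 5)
Your cohomological framework is correct and standard: the exact sequence $H^1(G,\mathcal{O}_K^{\times})\to\bigoplus_p\Z/e_p\Z\to Po(K)\to 0$ and the identification $Cl(K)^G/Po(K)\cong H^1(G,\mathcal{P}(K))$ are the usual genus-theoretic description of the P\'olya group, and your reduction of the theorem to the statement ``for suitably constructed fields, $Po(K)=Cl(K)_l\cong G$'' is the right reduction --- it is equivalent to the paper's formulation in terms of strongly ambiguous classes ${S_F^{(\tau)}}_s$ and the maximal abelian $l$-extension of $\Q$ inside the genus field. The problem is that the proposal does not actually prove the theorem: everything beyond Yahagi's original class-group statement is concentrated in the step you yourself label ``the crux'' and ``the main obstacle,'' and you then attribute it to ``the delicate part of Yahagi's construction'' without carrying it out. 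Concretely, two things are missing. First, you never show that the primes $p_i$ and the cutting character can be chosen so that $\operatorname{im}\phi$ has cokernel exactly the prescribed $G$ of exponent $l^m$ (rather than some other quotient of $(\Z/l^n\Z)^t$); this requires genuine control of the image of the unit cohomology $H^1(G,\mathcal{O}_K^{\times})$, which is the hard arithmetic input. Second, your sufficient condition ``the $l$-Hilbert class field of $K$ is abelian over $\Q$'' only forces $S_F=S_F^{(\tau)}$ (every class is \emph{ambiguous}), not $S_F={S_F^{(\tau)}}_s$ (every class is \emph{strongly} ambiguous, i.e.\ contains an invariant ideal); the discrepancy is exactly the group $H^1(G,\mathcal{P}(K))$ whose vanishing you dispatch with ``one checks.''

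For comparison, the paper's sketch closes precisely these two gaps by citing the two specific mechanisms in Yahagi: the construction (pp.~280--281) of a cyclic $F$ whose associated maximal abelian $l$-extension $K$ of $\Q$ inside the genus field satisfies $\Gal(K/F)\simeq G$, and the argument with the map $\varphi\colon S_F^{(\tau)}\to S_F/S_F^{1-\tau}$ showing that ${S_F^{(\tau)}}_s$ already surjects onto $S_F/S_F^{1-\tau}$, which forces $S_F={S_F^{(\tau)}}_s$ and hence $Po(F)\simeq G$. Your plan is compatible with, and essentially dual to, that route, but as written it is a programme with the decisive steps deferred rather than a proof.
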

\begin{proof}
    For a cyclic extension $F/k$ of a number field $k$, and $\tau$ a generator of its Galois group, Yahagi defines the set ${S_F^{(\tau)}}_s=\{c\in S_F\;$; $c$ contains an ideal $\mathfrak{a}$ of $F$ such that $\mathfrak{a}^\tau=\mathfrak{a}\}$ (\cite{ya} p.276) where $S_F$ is the $l$-class group of $F$. When $k=\Q$ this set corresponds with the $l$-part of the P\'olya group of $F$, for it is generated by classes of ambiguous ideals. As we work with $l$-extensions, $Po(F)\otimes\Z_l=Po(F).$
    
    By class field theory, if $S_F={S_F^{(\tau)}}_s$, then ${S_F^{(\tau)}}_s\simeq G(K/F)$ where $K$ is the maximal abelian $l$-extension of $\Q$ contained in the genus field of $F/\Q$.
    
    Yahagi starts to build an abelian extension $K/\Q$, and a subfield $F$, cyclic over $\Q$ such that $K$ is exactly the maximal abelian $l$-extension of $\Q$ contained in the genus field of $F/\Q$ and $G(K/F)\simeq G$ (\cite{ya} p. 280-281).

    Then, considering the map $\varphi:S_{F}^{(\tau)}\to S_{F}/S_{F}^{1-\tau}$ so that the diagram below is commutative:
    
\begin{equation*}
      \xymatrix{
    S_F^{(\tau)} \ar[r] \ar[rd]_{\varphi} & S_F \ar[d] \\
   & S_F/S_F^{1-\tau}
  }
  \end{equation*}
  Yahagi shows that the image of ${S_F^{(\tau)}}_s$ by $\varphi$ is isomorphic to $S_{F}/S_F^{1-\tau}$, which implies that $\varphi$ is bijective and so $S_F={S_F^{(\tau)}}_s$.
\end{proof}
We can now deduce our main result from Yagahi's Theorem.
\begin{prop}
Let $G$ a finite abelian group. Then there exist infinitely many abelian extensions $E/\Q$ so that $Po(E)\simeq G.$
\end{prop}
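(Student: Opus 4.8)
The plan is to decompose the target group $G$ into its primary components and realize each one using Yahagi's Theorem, then glue the pieces together using the coprimality machinery from Section~1.

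\textbf{Step 1: Primary decomposition.}
Since $G$ is a finite abelian group, I would write it as the direct sum of its $l$-primary parts:
\begin{equation*}
    G \simeq G_{l_1} \oplus G_{l_2} \oplus \dots \oplus G_{l_r},
\end{equation*}
where $l_1, \dots, l_r$ are the distinct primes dividing $|G|$ and each $G_{l_i}$ is an abelian $l_i$-group. Each $G_{l_i}$ has some exponent $l_i^{m_i}$.

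\textbf{Step 2: Realize each primary part.}
For each index $i$, the group $G_{l_i}$ is an abelian group of exponent $l_i^{m_i}$. By the version of Yahagi's Theorem proved just above (taking $n = m_i$, which satisfies $n \ge m_i$ and $n \ge 1$ provided $m_i \ge 1$), there exist infinitely many cyclic extensions $K_i/\Q$ of degree $l_i^{n_i}$ for a suitable $n_i \ge m_i$ whose P\'olya group is isomorphic to $G_{l_i}$. In the degenerate case where $G_{l_i}$ is trivial (which does not actually occur among the $l_i$ dividing $|G|$, but is worth noting), one could instead invoke Proposition~\ref{2.1} to produce cyclic P\'olya fields of $l_i$-power degree.

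\textbf{Step 3: Compositum and direct sum.}
Let $K_1, \dots, K_r$ be such fields, one for each distinct prime, and set $L = K_1 K_2 \cdots K_r$. Because the $K_i$ have pairwise coprime degrees $l_i^{n_i}$, I would apply Corollary~\ref{1.7} (iterated, or more precisely applied inductively to the pair $K_1 \cdots K_{r-1}$ and $K_r$, noting that their degrees remain coprime) to obtain
\begin{equation*}
    Po(L) \simeq Po(K_1) \oplus \dots \oplus Po(K_r) \simeq G_{l_1} \oplus \dots \oplus G_{l_r} \simeq G.
\end{equation*}
Since $L$ is a compositum of abelian extensions of $\Q$, it is itself abelian over $\Q$, so $E = L$ has the required properties. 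Finally, because Yahagi's Theorem supplies infinitely many choices for each $K_i$ (with distinct degrees or distinct ramification), varying these choices yields infinitely many distinct such fields $E$, giving the ``infinitely many'' assertion.

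\textbf{Main obstacle.}
The principal point requiring care is the inductive application of Corollary~\ref{1.7}: that corollary is stated for two fields with coprime degrees, so I must verify that at each stage of the induction the accumulated compositum $K_1 \cdots K_{j}$ (of degree $\prod_{i\le j} l_i^{n_i}$) has degree coprime to $[K_{j+1}:\Q] = l_{j+1}^{n_{j+1}}$, which holds because the $l_i$ are pairwise distinct primes. I also need the compositum to genuinely have degree equal to the product, i.e. that the $K_i$ are linearly disjoint over $\Q$ --- this again follows from the coprimality of the degrees. The only other subtlety is the bookkeeping to ensure the ``infinitely many'' conclusion survives the compositum construction, which it does since each factor can be chosen independently from an infinite supply.
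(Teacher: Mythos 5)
Your proposal is correct and follows essentially the same route as the paper: decompose $G$ into its primary (Sylow) components, realize each one by Yahagi's Theorem as the Pólya group of a cyclic $l_i$-power-degree extension, and combine them via Corollary \ref{1.7} using the coprimality of the degrees. The extra care you take with the inductive application of Corollary \ref{1.7} and with the ``infinitely many'' bookkeeping is a welcome refinement of details the paper leaves implicit, but it is the same argument.
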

\begin{remrk}[\cite{al}, Ex.1] 
    For the case $G=\{0\}$, it suffices to consider cyclotomic fields.
\end{remrk}
\begin{proof}
    Since $G$ is abelian, it is isomorphic to a direct product of its $p$-Sylow subgroups: 
    %\Z/l_1^{e_1}\Z\times\dots\times\Z/l_m^{e_m}\Z$ where the $l_i$,  are distinct primes ($1\le i \le m$). For each cyclic group $\Z/l_i^{e_i}\Z$,
    $G\simeq H_1\times\dots\times H_r.$ For all $1\le i \le r$,  Yagahi's Theorem gives us infinitely many abelian extensions of $\Q$ whose P\'olya groups are isomorphic to $H_i$.
    %of degree $l_i^{n_i}$ with $n_i\ge e_i$ $\Z/l_i^{e_i}\Z.$ 
    For $1\le i\le r$, let $K_i$ such an extension, with P\'olya group isomorphic to $H_i$. Then for all $i\ne j$, $([K_i:\Q],[K_j:\Q])=1$ and so, by Corollary \ref{1.7},
    \begin{equation*}
        Po(K_1\dots K_r)\simeq Po(K_1)\times\dots\times Po(K_r)\simeq G.
    \end{equation*}
\end{proof}

This study leads us to define the following mathematical object: for a number field $K$, Galois over $\Q$, we set $\Tilde{Cl(K)}:=Cl(K)/Po(K)$. 
Then, for such a $K$, it is interesting to find whether this relative class group is trivial or not. Here
comes a natural conjecture:
\begin{conj}
    There exist infinitely many number fields $K$, Galois over $\Q$, such that $\Tilde{Cl(K)}=\{0\}.$
\end{conj}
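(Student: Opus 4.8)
The plan is to treat two regimes separately, since the Sylow-decomposition strategy that realizes a nontrivial $G$ degenerates precisely when $G=\{0\}$. Suppose first $G\neq\{0\}$, and write $G$ as the direct product of its $p$-Sylow subgroups,
\[
G\simeq H_1\times\dots\times H_r,
\]
with the $p_i$ pairwise distinct primes and each $H_i$ a nontrivial abelian $p_i$-group, say of exponent $p_i^{m_i}$, $m_i\ge 1$. For each $i$, I would apply Yahagi's Theorem with $l=p_i$ to the group $H_i$ --- invoked in full generality, as its hypothesis allows an arbitrary abelian $l$-group of the given exponent, not merely a cyclic one --- to obtain, for any chosen $n_i\ge m_i$, infinitely many cyclic extensions $K_i/\Q$ of degree $p_i^{n_i}$ with $Po(K_i)\simeq H_i$. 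I would fix one such $K_i$ for each $i$ and set $E=K_1\cdots K_r$, which is abelian over $\Q$ as a compositum of cyclic extensions.

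Since the degrees $[K_i:\Q]=p_i^{n_i}$ are pairwise coprime, I would chain Corollary \ref{1.7} along $E=((K_1K_2)K_3)\cdots K_r$: at each stage the partial compositum has degree a product of distinct prime powers, hence coprime to the degree of the next factor, which yields
\[
Po(E)\simeq Po(K_1)\oplus\dots\oplus Po(K_r)\simeq H_1\times\dots\times H_r\simeq G.
\]
Letting $n_1$ run through $m_1,m_1+1,\dots$ produces fields $E$ of unbounded degree, hence infinitely many distinct realizations.

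It remains to handle $G=\{0\}$, the case in which the empty Sylow product only returns $\Q$ and so cannot supply an infinite family. Here I would invoke Proposition \ref{2.1} directly: with, say, $l=2$ and $n=1$, it furnishes infinitely many cyclic --- hence abelian --- extensions $K/\Q$ satisfying $Po(K)=\{0\}$ (the cyclotomic fields of the Remark would serve equally well). The crux of the argument is therefore organizational rather than computational: one must recognize that the product construction cannot reach the trivial group and slot in Proposition \ref{2.1} for exactly that gap. The only other delicate points are the appeal to Yahagi's Theorem for the possibly non-cyclic blocks $H_i$ and the verification that Corollary \ref{1.7} may legitimately be chained across the $r$ pairwise-coprime factors.
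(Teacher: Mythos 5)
You have proved a different statement from the one asked. What you establish --- Sylow decomposition of $G$, Yahagi's Theorem for each $p_i$-block, a chained application of Corollary \ref{1.7}, and Proposition \ref{2.1} for $G=\{0\}$ --- is that every finite abelian group occurs as $Po(E)$ for infinitely many abelian $E/\Q$. That is the paper's Proposition immediately preceding the statement (and your argument for it essentially coincides with the paper's own proof of that Proposition). But the statement at hand concerns the relative class group $\Tilde{Cl(K)}:=Cl(K)/Po(K)$: it asserts the existence of infinitely many Galois $K/\Q$ with $Cl(K)=Po(K)$, i.e., with class group generated by the classes of ambiguous ideals. Your construction controls $Po$ but never $Cl$. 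In Yahagi's fields only the $l$-class group is pinned down, so the prime-to-$l$ part of $Cl(K)$, and hence $\Tilde{Cl(K)}$, is completely uncontrolled; and in your $G=\{0\}$ case Proposition \ref{2.1} gives $Po(K)=\{0\}$, whence $\Tilde{Cl(K)}=Cl(K)$, which vanishes only for class number one --- the subfields of $\Q(\zeta_p)$ used there merely have class number prime to $l$. The cyclotomic fields of the Remark fail for the same reason: they are P\'olya fields, but their class groups are typically far from trivial.

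The deeper point is that no such proof should be expected: the statement is labelled a Conjecture in the paper and is left open. Producing infinitely many Galois $K$ with $Cl(K)=Po(K)$ requires controlling the full class group, which is of a difficulty comparable to infinitude-of-class-number-one problems; the paper only records Chabert's result that infinitely many \emph{non-Galois} fields satisfy $Cl(K)=Po(K)$, and it proves results in the opposite direction for the families it can analyze: for imaginary quadratic and, more generally, abelian CM fields, $|Po(K)|=o(\sqrt{|d_K|})$ while $h_K$ (respectively $h_K^-$) grows like $\sqrt{|d_K|}$, so only finitely many such fields have $\Tilde{Cl(K)}=\{0\}$. Any viable attack would therefore have to live in the real, non-CM regime, where the regulator can absorb the Brauer--Siegel growth --- which is precisely why the question remains open and why your construction, which never touches $Cl(K)$, cannot close it.
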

\begin{remrk}
    Chabert proved in \cite{jlc2}  that there exist infinitely many non Galois number fields $K$ such that $Cl(K)=Po(K)$.
\end{remrk}
However, in the case of imaginary quadratic fields, the problem seems easier to solve. Indeed just as we can prove that there exist a finite number of imaginary quadratic fields with class number one, so we show that there exist a finite number of imaginary quadratic fields with trivial relative class group.
\begin{thm}
    There exist a finite number of imaginary quadratic fields $K/\Q$ such that $\Tilde{Cl(K)}=\{0\}$.
\end{thm}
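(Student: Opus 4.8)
The plan is to reduce the statement to a classical finiteness theorem for class groups of imaginary quadratic fields, and then to prove that theorem by the same analytic mechanism used for class number one. First I would note that, by Lemma~\ref{l1.5} applied to the quadratic field $K$, we have $Po(K)^{2}=\{0\}$, so $Po(K)$ is an elementary abelian $2$-group and in particular $Po(K)\subseteq Cl(K)[2]$. Consequently, if $\Tilde{Cl(K)}=\{0\}$, that is $Cl(K)=Po(K)$, then $Cl(K)=Cl(K)[2]$, so $Cl(K)$ is elementary abelian of exponent dividing $2$ (i.e.\ $K$ has ``one class per genus''). It therefore suffices to prove that only finitely many imaginary quadratic fields have class group of exponent dividing $2$, and the set $\{K : \Tilde{Cl(K)}=\{0\}\}$ will be contained in this finite set.

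For that assertion I would compare two estimates of the class number $h(K)=|Cl(K)|$. Let $t$ be the number of distinct rational primes ramifying in $K$, equivalently the number of prime divisors of the discriminant $d_K$. By genus theory the $2$-rank of $Cl(K)$ equals $t-1$, so if $Cl(K)$ has exponent dividing $2$ then $h(K)=2^{t-1}$. Since $d_K$ is divisible by $t$ distinct primes, the primorial bound gives $\log|d_K|\gg t\log t$, hence $t=O(\log|d_K|/\log\log|d_K|)$, which yields the upper estimate
\begin{equation*}
    h(K)=2^{t-1}=|d_K|^{o(1)}\qquad (|d_K|\to\infty).
\end{equation*}

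On the other hand, the analytic class number formula reads $h(K)=\frac{w\sqrt{|d_K|}}{2\pi}\,L(1,\chi_{d_K})$, where $\chi_{d_K}$ is the quadratic character attached to $K$ and $w$ the number of roots of unity in $K$. Combined with Siegel's lower bound $L(1,\chi_{d_K})\gg_{\varepsilon}|d_K|^{-\varepsilon}$, this gives $h(K)\gg_{\varepsilon}|d_K|^{1/2-\varepsilon}$. For $|d_K|$ large this lower bound exceeds the upper bound $|d_K|^{o(1)}$, a contradiction; hence $|d_K|$ is bounded, only finitely many imaginary quadratic fields have elementary abelian class group, and the theorem follows.

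The main obstacle is precisely the lower bound: forcing $h(K)$ to grow faster than $2^{\omega(d_K)}$ requires an \emph{ineffective} lower bound for $L(1,\chi)$ (Siegel's theorem, or equivalently the Brauer--Siegel theorem), exactly as in the classical proof that there are finitely many imaginary quadratic fields of class number one. This is why the statement asserts only finiteness and not an explicit bound; an effective version would amount to the determination of Euler's idoneal numbers and would rely on Heegner--Baker--Stark type arguments, which I would mention but not carry out here.
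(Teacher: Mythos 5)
Your proof is correct and follows essentially the same route as the paper: both arguments come down to comparing the genus-theoretic count $2^{t-1}$ (which is $|d_K|^{o(1)}$ by the primorial bound) against the ineffective Siegel/Brauer--Siegel lower bound $h_K\gg_\varepsilon |d_K|^{1/2-\varepsilon}$. Your intermediate reduction to ``class group of exponent dividing $2$'' via Lemma~\ref{l1.5} is only a repackaging of the paper's direct use of Hilbert's formula $|Po(K)|=2^{s_K-1}$, and your remark on ineffectivity matches the paper's reliance on Brauer--Siegel.
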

\begin{proof}
    Let $K$ an imaginary quadratic field, $h_K$ its class number and $d_K$ its discriminant. First, by
    Brauer-Siegel's Theorem \cite{Lang}, for all $\varepsilon>0$ there exists a positive constant $C_\varepsilon$ sucht that $h_K>C_\varepsilon d_K^{1/2-\varepsilon}$.
   Moreover, by Dirichlet's Theorem (\cite{wash} p.38) about class number for imaginary quadratic fields:
    \begin{equation*}
        \frac{2\pi h_K}{w\sqrt{|d_K|}}=L(1,\chi),
    \end{equation*}
    where $w$ stands for the number of roots of unity in $K$. From this equality we conclude that $h_K<<\sqrt{|d_K|}\log(|d_K|)$.
    Considering these two results, we show that 
    \begin{equation*}
        \frac{\log h_K}{\log |d_K|}\underset{|d_K|\to+\infty}{\to}\frac{1}{2}.
    \end{equation*}
    So, in term of growth, $h_K\sim\sqrt{|d_K|}.$\\
    
    For the P\'olya group of an imaginary quadratic field $K$, Hilbert \cite{Hil} proved that 
    \begin{equation*}
        |Po(K)|=2^{s_K-1},
    \end{equation*}
    where $s_K$ denotes the number of ramified prime numbers in the extension $K/\Q$. If $K=\Q(\sqrt{d})$, with $d$ squarefree, then the discriminant $d_K$ is $d$ or $4d$ depending on $d\equiv 1\mod 4$ or $d\equiv 2,3\mod 4$. If $2$ is ramified, $|d_K|=2^m p_1\dots p_{s_K-1}$, where the $p_i$'s are distinct odd primes, and $m\le 3.$ So, $|d_K|\ge2^3P_{s_K-1}$ where $P_{s_K-1}$ is the product of the first $s_K-1$ odd primes. On the one hand, when $s_K$ is not bounded, $\sqrt{P_{s_K-1}}\ge 2^{s_k-1}$ and so $\underset{d_K\to+\infty}{\lim}\frac{|Po(K)|}{\sqrt{|d_K|}}=0$.
    On the other hand, when $s_K$ is bounded, having $|d_K|\to +\infty$ means that at least one $p_i$ %1,\dots,p_{s_K-1}$ 
    is not bounded, and so $\underset{|d_K|\to+\infty}{\lim}\frac{|Po(K)|}{\sqrt{|d_K|}}=0$.  Consequently, the P\'olya group and the class group will only correspond for bounded discriminants with bounded $s_K$. 
    %Otherwise we see that $d_K$ grows faster than $|Po(K)|$.
    By Hermite's Theorem \cite{Her}, there are only finitely many number fields of bounded discriminant $d_K$, hence there is a finite number of imaginary quadratic fields with discriminant $d_K$.
    
\end{proof}

We can generalize  this result to the case of abelian CM fields.
\begin{thm}\label{3.8}
    Let $K$ runs through an infinite sequence of abelian CM fields. Let $m_K$, $h_K$ the conductor and the class number of $K$ and let $h_{K}^+$ the class number of $K^+$. Then,
    \begin{equation*}
        \underset{m_K\to+\infty}{\lim}\frac{\log(h_K^-)}{\log(\sqrt{|d_K|})}=1,
    \end{equation*}
    where $h_K^-=h_K/h_K^+$.
\end{thm}

In order to prove this Theorem we need to discuss about the discriminant $d_K$ of an abelian CM field $K$. For every abelian number field $K$, we will denote $m_K$ the conductor of $K$.
\begin{thm}[\cite{vj} 4.1]\label{3.9} Let $K$ an abelian number field of conductor $m_K=\prod_{i=1}^lp_i^{\alpha_i}$, where the $p_i$'s are prime number and $\alpha_i$'s and $l$ are positive integers. Let $d_K$ denotes the discriminant of $K$. Then,
\begin{equation*}
    |d_K|=(\prod_{i=1}^lp_i^{\alpha_i-\lambda_i})^{[K:\Q]}
\end{equation*}
where 
\begin{equation*}
    \lambda_i=\frac{p_i^{\alpha_i-(p_i,2)}-1+(p_i-1)/u_i}{p_i^{\alpha_i-(p_i,2)}(p_i-1)}
\end{equation*}
and
\begin{equation*}
    u_i=\frac{[K\cdot\Q(\zeta_{m_K/p_i^{\alpha_i}}):\Q(\zeta_{m_K/p_i^{\alpha_i}})]}{p_i^{\alpha_i-1}}.
\end{equation*}

\end{thm}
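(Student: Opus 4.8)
The plan is to derive the formula from the conductor--discriminant formula, after translating everything into the language of the Dirichlet characters attached to $K$. Since $K/\Q$ is abelian, let $X\subseteq\widehat{(\Z/m_K\Z)^\times}$ be the group of Dirichlet characters corresponding to $\Gal(K/\Q)$, so that $|X|=[K:\Q]$ and $K$ is the subfield of $\Q(\zeta_{m_K})$ fixed by $\bigcap_{\chi\in X}\ker\chi$. The conductor--discriminant formula gives
\begin{equation*}
    |d_K|=\prod_{\chi\in X}\mathfrak{f}(\chi),
\end{equation*}
where $\mathfrak{f}(\chi)$ is the conductor of $\chi$. Writing $m_K=\prod_{i=1}^{l}p_i^{\alpha_i}$ and decomposing $\chi=\prod_{i}\chi_i$ with $\chi_i$ a character modulo $p_i^{\alpha_i}$ (chinese remainder theorem), the conductor is multiplicative, $\mathfrak{f}(\chi)=\prod_i\mathfrak{f}(\chi_i)=\prod_i p_i^{c(\chi_i)}$. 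Only the $p_i$ ramify, so the whole statement amounts to proving, for every $i$,
\begin{equation*}
    v_{p_i}(|d_K|)=\sum_{\chi\in X}c(\chi_i)=(\alpha_i-\lambda_i)\,[K:\Q].
\end{equation*}

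Next I would give $u_i$ its Galois-theoretic meaning. Let $m'=m_K/p_i^{\alpha_i}$ and let $X_i$ be the image of $X$ under the projection $\widehat{(\Z/m_K\Z)^\times}\to\widehat{(\Z/p_i^{\alpha_i}\Z)^\times}$. A Galois-correspondence computation inside $\Q(\zeta_{m_K})$ identifies $\Gal(\Q(\zeta_{m_K})/K\Q(\zeta_{m'}))$ with the annihilator of $X_i$ in $(\Z/p_i^{\alpha_i}\Z)^\times$, whence
\begin{equation*}
    [K\cdot\Q(\zeta_{m'}):\Q(\zeta_{m'})]=|X_i|\qquad\text{and so}\qquad u_i=\frac{|X_i|}{p_i^{\alpha_i-1}}.
\end{equation*}
Since the fibres of the surjection $X\to X_i$ all have cardinality $|X|/|X_i|$ and $c(\chi_i)$ depends only on $\chi_i\in X_i$, the sum collapses to $v_{p_i}(|d_K|)=\tfrac{|X|}{|X_i|}\sum_{\psi\in X_i}c(\psi)$, reducing everything to the purely local identity $\tfrac{1}{|X_i|}\sum_{\psi\in X_i}c(\psi)=\alpha_i-\lambda_i$.

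The core computation is the explicit evaluation of $\sum_{\psi\in X_i}c(\psi)$ from the structure of $(\Z/p_i^{\alpha_i}\Z)^\times$ and the classical description of conductors: a character whose wild part has order $p_i^{\,j}$ with $j\ge1$ has conductor exponent $j+1$, a nontrivial tame character has exponent $1$, and the trivial character has exponent $0$. For odd $p_i$ the group is cyclic, so $X_i$ splits as a full wild part of order $p_i^{\alpha_i-1}$ (full because $m_K$ is the exact conductor) times a tame part of order $u_i$, an integer dividing $p_i-1$; counting characters of each order yields
\begin{equation*}
    \sum_{\psi\in X_i}c(\psi)=(u_i-1)+u_i(p_i-1)\sum_{j=1}^{\alpha_i-1}(j+1)\,p_i^{\,j-1}.
\end{equation*}
Dividing by $|X_i|=p_i^{\alpha_i-1}u_i$ and summing the elementary series $\sum_j(j+1)p_i^{\,j-1}$ returns exactly $\alpha_i-\lambda_i$ with $\lambda_i$ as stated; I would check the bookkeeping against $K=\Q(\zeta_{p^2})$, where one recovers the classical value $v_p(|d_K|)=p(2p-3)$.

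The main obstacle is the prime $p_i=2$: for $\alpha_i\ge3$ the group $(\Z/2^{\alpha_i}\Z)^\times\cong\Z/2\Z\times\Z/2^{\alpha_i-2}\Z$ is no longer cyclic, its wild cyclic part $\langle5\rangle$ has order $2^{\alpha_i-2}$ rather than $2^{\alpha_i-1}$, and the extra factor $\langle-1\rangle$ (of conductor exponent $2$) must be folded into the count. This is precisely the source of the correction $(p_i,2)=\gcd(p_i,2)$ in the exponent $p_i^{\alpha_i-(p_i,2)}$, and it forces $u_i$ to take the possibly non-integral values dictated by the field-degree identity above while keeping $(p_i-1)/u_i$ a positive integer, so that $\lambda_i$ remains well defined. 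Once the $p_i=2$ count is carried out and shown to yield the same closed form, assembling the local contributions $v_{p_i}(|d_K|)=(\alpha_i-\lambda_i)[K:\Q]$ over all $i$ gives $|d_K|=\bigl(\prod_i p_i^{\alpha_i-\lambda_i}\bigr)^{[K:\Q]}$, as claimed.
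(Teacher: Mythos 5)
The paper itself offers no proof of this statement: it is imported verbatim from the cited reference (\cite{vj}, 4.1), so there is no argument of the paper to compare yours against. Your route --- conductor--discriminant formula, localisation at each $p_i$ via the decomposition $\chi=\prod_i\chi_i$, identification of $u_i$ as $|X_i|/p_i^{\alpha_i-1}$ through the Galois correspondence for $K\cdot\Q(\zeta_{m_K/p_i^{\alpha_i}})$, and then an explicit character count --- is the standard and correct way to prove it. Your reduction to the local identity $\frac{1}{|X_i|}\sum_{\psi\in X_i}c(\psi)=\alpha_i-\lambda_i$ is sound (the fibre argument for $X\to X_i$ is fine), and for odd $p_i$ your count is complete and checks out: with $S=\sum_{j=1}^{\alpha_i-1}(j+1)p_i^{j-1}$ one verifies the polynomial identity $(p_i-1)^2S=\alpha_i p_i^{\alpha_i-1}(p_i-1)-p_i^{\alpha_i-1}-p_i+2$, which is exactly what is needed, and your sanity check $v_p(|d_{\Q(\zeta_{p^2})}|)=p(2p-3)$ is right. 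The exactness-of-conductor argument forcing the full wild part $p_i^{\alpha_i-1}$ in the cyclic group $X_i$ is also correct (and your formula degenerates correctly when $\alpha_i=1$, where the sum is just $u_i-1$).

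The one genuine omission is that you never carry out the $p_i=2$ count; you describe the structure and assert the closed form persists. Worse, your odd-$p$ template of splitting $X_i$ as (wild part)$\times$(tame part) does \emph{not} transfer verbatim: for $a=\alpha_i\ge3$ a subgroup of $\widehat{(\Z/2^a\Z)^\times}\cong\Z/2\times\Z/2^{a-2}$ containing a character of conductor $2^a$ need not split over the $\langle-1\rangle$-factor --- e.g.\ the cyclic group generated by $\chi_{-4}\psi$ with $\psi$ of full order on $\langle5\rangle$ projects onto the wild quotient without containing $\chi_{-4}$. So the $2$-adic case requires a short case analysis over the three possible types of $X_i$: the full group (where $u_i=1$ and $\sum_\psi c(\psi)=(a-1)2^{a-1}$, average $a-1$), the subgroup $\langle\psi\rangle$, and the subgroup $\langle\chi_{-4}\psi\rangle$ (both of index $2$, $u_i=1/2$); using $c(\chi_{-4}^{\varepsilon}\psi_5)=j+2$ when the $\langle5\rangle$-component $\psi_5$ has exact order $2^j\ge2$, one finds that the last two types give the \emph{same} sum $(a-1)2^{a-2}-1$, hence average $(a-1)-2^{-(a-2)}$, and both values agree with $a-\lambda_i$ for the stated $\lambda_i$ with $(p_i,2)=2$. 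With that case analysis supplied (and the trivial remark that $\alpha_i=1$ cannot occur at $p_i=2$, since no character has conductor $2$), your proof is complete.
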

\begin{coro}\label{3.10}
    With the notations above, $\lambda_i$ is bounded by 2 for $i=1,\dots,l$.
\end{coro}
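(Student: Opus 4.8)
The plan is to prove the sharper statement $\lambda_i < 2$ by splitting the expression for $\lambda_i$ into two pieces, each of which I will bound by $1$. First I would record the two structural facts about the data of Theorem \ref{3.9} on which the estimate rests: that $u_i$ is a positive integer (so $u_i \ge 1$), and that the exponent $\alpha_i - (p_i,2)$ appearing in the formula is nonnegative. The second point is immediate when $p_i$ is odd, since then $(p_i,2)=1$ and $\alpha_i \ge 1$; when $p_i = 2$ the usual normalization of the conductor rules out $\alpha_i = 1$ (as $\Q(\zeta_2)=\Q$), so $\alpha_i \ge 2 = (p_i,2)$. Writing $A_i := p_i^{\alpha_i-(p_i,2)}$, I therefore have $A_i \ge 1$ and $u_i \ge 1$ in every case.

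The computation is then a single decomposition. Clearing the numerator of $\lambda_i$ gives
\begin{equation*}
\lambda_i = \frac{A_i - 1}{A_i(p_i-1)} + \frac{1}{A_i u_i}.
\end{equation*}
I would bound the two summands separately. For the first, $\dfrac{A_i-1}{A_i(p_i-1)} = \dfrac{1}{p_i-1}\left(1-\dfrac{1}{A_i}\right) < \dfrac{1}{p_i-1} \le 1$, using $A_i \ge 1$ and $p_i \ge 2$. For the second, $A_i \ge 1$ and $u_i \ge 1$ give $\dfrac{1}{A_i u_i} \le 1$. Adding these yields $\lambda_i < 2$, which is the claimed bound. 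As a sanity check, the same identity shows $\lambda_i = 1$ whenever $p_i = 2$ (where $u_i = 1$ is forced) and $\lambda_i = 1/u_i \le 1$ whenever $\alpha_i = 1$.

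The only non-formal input — and hence the main obstacle — is the justification that $u_i \ge 1$, i.e. that the quantity $u_i$ is genuinely a positive integer rather than a proper fraction. This is exactly the identification of $u_i$ with the tame ramification index of $p_i$ in $K$, a divisor of $p_i-1$, which underlies the discriminant formula of Theorem \ref{3.9}; concretely it amounts to the fact that $p_i^{\alpha_i-1}$ divides $[K\cdot\Q(\zeta_{m_K/p_i^{\alpha_i}}):\Q(\zeta_{m_K/p_i^{\alpha_i}})]$ because $p_i^{\alpha_i}$ exactly divides the conductor $m_K$. Granting this, the corollary reduces to the elementary estimate above, and everything else is routine algebra.
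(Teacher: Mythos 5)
Your decomposition of $\lambda_i$ into $\frac{A_i-1}{A_i(p_i-1)}+\frac{1}{A_iu_i}$ is exactly the splitting the paper uses, and the odd-$p_i$ case goes through. The gap is precisely in the input you yourself flag as the main obstacle: the claim that $u_i$ is a positive integer, hence $u_i\ge 1$. That is correct for odd $p_i$ (there $(\Z/p_i^{\alpha_i}\Z)^*$ is cyclic, so exact divisibility of the conductor by $p_i^{\alpha_i}$ does force $p_i^{\alpha_i-1}$ to divide the degree), but it is false for $p_i=2$. Take $K=\Q(\sqrt{2})$: the conductor is $8$, so $\alpha_i=3$ and $p_i^{\alpha_i-1}=4$, while $[K\cdot\Q(\zeta_{m_K/2^{\alpha_i}}):\Q(\zeta_{m_K/2^{\alpha_i}})]=[K:\Q]=2$, giving $u_i=1/2<1$. (Consistency check: the formula then yields $\lambda_i=3/2$ and $|d_K|=(2^{3-3/2})^2=8$, which is the true discriminant.) So the bound $\frac{1}{A_iu_i}\le 1$ is not justified by ``$u_i\ge1$,'' and your parenthetical assertion that $u_i=1$ is forced when $p_i=2$, so that $\lambda_i=1$, is also wrong.

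The estimate is repairable, and the repair is what the paper actually does. What you need is $A_iu_i\ge 1$, and one computes $A_iu_i=[K\cdot\Q(\zeta_{m_K/p_i^{\alpha_i}}):\Q(\zeta_{m_K/p_i^{\alpha_i}})]\big/p_i^{(p_i,2)-1}$. For $p_i$ odd this is the degree itself, hence $\ge 1$; for $p_i=2$ it is half the degree, and the degree is at least $2$ because $2^{\alpha_i}$ exactly divides the conductor, so $K\not\subseteq\Q(\zeta_{m_K/2^{\alpha_i}})$. With that substitution your two-term bound gives $\frac{1}{A_iu_i}\le 1$ and $\frac{A_i-1}{A_i(p_i-1)}\le 1$, hence $\lambda_i\le 2$, matching the paper's case split between odd $p_i$ and $p_i=2$.
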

\begin{proof}
    We have:
    \begin{equation*}
        \lambda_i=\frac{1}{(p_i-1)}-\frac{1}{p_i^{\alpha_i-(p_i,2)}(p_i-1)}+\frac{p_i^{\alpha_i-1}}{p_i^{\alpha_i-(p_i,2)}[K\cdot\Q(\zeta_{m_K/p_i^{\alpha_i}}):\Q(\zeta_{m_K/p_i^{\alpha_i}})]}.
    \end{equation*}
    If $p_i$ is odd, then $\lambda_i\le\frac{1}{(p_i-1)}+\frac{1}{[K\cdot\Q(\zeta_{m_K/p_i^{\alpha_i}}):\Q(\zeta_{m_K/p_i^{\alpha_i}})]} \le 2$.
    \\
    If $p_i=2$,  then $\lambda_i= 1-\frac{1}{2^{\alpha_i-2}} +\frac{2}{[K\cdot\Q(\zeta_{m_K/2^{\alpha_i}}):\Q(\zeta_{m_K/2^{\alpha_i}})]}$. As $\alpha_i\ge 1$, $K\not \subset \Q(\zeta_{m_K/2^{\alpha_i}})$ and $[K\cdot\Q(\zeta_{m_K/p_i^{\alpha_i}}):\Q(\zeta_{m_K/p_i^{\alpha_i}})]\ge 2$. So 
    \begin{equation*}
        \lambda_i\le 1-\frac{1}{2^{\alpha_i-2}}+1\le 2.
        \end{equation*}
\end{proof}
 \begin{lem}\label{3.11}
    With the notations of Theorem \ref{3.9}, suppose $K$ runs through an infinite sequence of abelian fields. Then:
    \begin{equation*}
        \frac{[K:\Q]}{\log|d_K|}\underset{m_K\to+\infty}{\to}0.
    \end{equation*}
 \end{lem}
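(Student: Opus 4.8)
The plan is to evaluate the ratio explicitly via the conductor--discriminant formula of Theorem \ref{3.9}, and then to bound the resulting denominator from below by a constant multiple of $\log m_K$.

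First I would take logarithms in Theorem \ref{3.9}. Since $|d_K|=\left(\prod_{i=1}^{l}p_i^{\alpha_i-\lambda_i}\right)^{[K:\Q]}$, this yields
\begin{equation*}
\log|d_K|=[K:\Q]\sum_{i=1}^{l}(\alpha_i-\lambda_i)\log p_i ,
\qquad\text{hence}\qquad
\frac{[K:\Q]}{\log|d_K|}=\frac{1}{\sum_{i=1}^{l}(\alpha_i-\lambda_i)\log p_i}.
\end{equation*}
Thus it suffices to prove that the denominator tends to $+\infty$ as $m_K\to+\infty$, and the natural target is an estimate of the form $\sum_{i}(\alpha_i-\lambda_i)\log p_i\ge \tfrac12\log m_K$.

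The heart of the argument is therefore a sharpening of Corollary \ref{3.10}: instead of merely $\lambda_i\le 2$, I would prove the scale-sensitive bound $\lambda_i\le \alpha_i/2$, equivalently $\alpha_i-\lambda_i\ge \alpha_i/2$. Writing $D_i:=[K\cdot\Q(\zeta_{m_K/p_i^{\alpha_i}}):\Q(\zeta_{m_K/p_i^{\alpha_i}})]$, the key observation is that $D_i\ge 2$: the field $\Q(\zeta_{m_K/p_i^{\alpha_i}})$ is unramified at $p_i$, whereas $p_i\mid m_K$ forces $K$, and hence the compositum, to be ramified at $p_i$, so the two fields cannot coincide. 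Feeding $D_i\ge 2$ into the explicit expression for $\lambda_i$ already rewritten in Corollary \ref{3.10}, and distinguishing $p_i$ odd from $p_i=2$ (and, within each, the small values $\alpha_i\le 3$), a short case check gives $\lambda_i\le\alpha_i/2$ in every case.

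Granting this, I would conclude at once: since $\alpha_i-\lambda_i\ge\alpha_i/2$ for each $i$,
\begin{equation*}
\sum_{i=1}^{l}(\alpha_i-\lambda_i)\log p_i\ \ge\ \frac12\sum_{i=1}^{l}\alpha_i\log p_i=\frac12\log m_K,
\end{equation*}
whence $\dfrac{[K:\Q]}{\log|d_K|}\le \dfrac{2}{\log m_K}\to 0$ as $m_K\to+\infty$, using that an infinite sequence of distinct abelian fields has unbounded conductor. The main obstacle is the case analysis establishing $\lambda_i\le\alpha_i/2$, and in particular the small-$\alpha_i$ cases for $p_i=2$, where the term $-1/2^{\alpha_i-2}$ and the bound $D_i\ge2$ must be combined carefully; the crude estimate $\lambda_i\le 2$ alone is insufficient, since it fails to control the contribution of large conductor exponents $\alpha_i$ when $m_K$ is a high power of a single prime.
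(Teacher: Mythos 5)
Your proposal is correct, and it reaches the conclusion by a genuinely different route in the key step. The paper also starts from Theorem~\ref{3.9} and reduces to showing that $\log\prod_{i}p_i^{\alpha_i-\lambda_i}\to+\infty$, but it then argues qualitatively by a dichotomy: if the $p_i$ are bounded then (using only $\lambda_i\le 2$ from Corollary~\ref{3.10}) some $\alpha_i\to+\infty$, while if the $p_i$ are unbounded the product of the $p_i^{\alpha_i-\lambda_i}$ already blows up. You instead sharpen Corollary~\ref{3.10} to the scale-sensitive bound $\lambda_i\le\alpha_i/2$, using $D_i\ge 2$ (the same ramification observation the paper invokes for $p_i=2$ in the proof of Corollary~\ref{3.10}, and which indeed holds for all $i$ since $p_i\mid m_K$ forces ramification of $K$ at $p_i$); your case check goes through, noting that $\alpha_i\ge 2$ automatically when $p_i=2$ since a conductor is never $\equiv 2\pmod 4$. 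This yields the uniform estimate $\log|d_K|\ge\frac{[K:\Q]}{2}\log m_K$, hence an explicit rate $[K:\Q]/\log|d_K|\le 2/\log m_K$, with no case distinction on the sequence and no need to extract a subsequence along which some $\alpha_i\to\infty$. What you lose is brevity (the case analysis for $\lambda_i\le\alpha_i/2$ must actually be written out); what you gain is a cleaner, quantitative statement that would also streamline the later arguments in Lemma~\ref{3.12} and the theorem on $|Po(K)|/\sqrt{|d_K|}$. Your closing remark that $\lambda_i\le 2$ "alone is insufficient" should be softened: it is insufficient only for a uniform bound of the form $c\log m_K$; combined with the paper's dichotomy it does suffice for the limit statement.
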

 \begin{proof}Let $l\in \N^*$ and $p_1,\dots,p_l$, $l$ distinct primes. Let $m_K=\prod_{i=1}^lp_i^{\alpha_i}$ with $\alpha_i\ge1$ for all $ 1\le i\le l$. If the $p_i$'s are bounded, then at least one $\alpha_i-\lambda_i$ tends to $+\infty$ when $m_K$ tends to $+\infty$. 
     By Corollary \ref{3.10} the $\lambda_i$'s are bounded for $i=1,\dots,l$ so at least one $\alpha_i$ tends to $+\infty$ when $m_K$ tends to $+\infty$. With the discriminant formula of Theorem \ref{3.9} we have 
     \begin{equation*}
         \frac{[K:\Q]}{\log|d_K|}=\frac{[K:\Q]}{[K:\Q]\log\prod_{i=1}^lp_i^{\alpha_i-\lambda_i}}=\frac{1}{\log\prod_{i=1}^lp_i^{\alpha_i-\lambda_i}}\underset{m_K\to+\infty}{\to} 0.
     \end{equation*}
     If the $p_i's$ are not bounded, then $\log\prod_{i=1}^lp_i^{\alpha_i-\lambda_i}$ tends to $+\infty$ when $m_K$ tends to $+\infty$. We conclude with the same argument.
 \end{proof}

\begin{lem}\label{3.12}
    With the notations of Theorem \ref{3.9}, let $K$ runs through an infinite sequence of abelian CM fields. Then:
    \begin{equation*}
        \underset{m_K\to+\infty}{\lim}\frac{|d_{K^+}|}{|d_K|}=0.
    \end{equation*}
\end{lem}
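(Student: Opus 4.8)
The plan is to reduce the statement to the conductor--discriminant formula and then to a purely multiplicative estimate on the conductors of the \emph{odd} Dirichlet characters attached to $K$.

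Write $G=\Gal(K/\Q)$ and let $\widehat G$ be its character group. Since $K/\Q$ is abelian, the conductor--discriminant formula gives $|d_K|=\prod_{\chi\in\widehat G}\mathfrak f(\chi)$, where $\mathfrak f(\chi)$ denotes the conductor of $\chi$. Because $K$ is a CM field, complex conjugation is a well-defined central element $c\in G$ of order $2$, and I would split $\widehat G$ into \emph{even} characters ($\chi(c)=1$) and \emph{odd} characters ($\chi(c)=-1$). The even characters are precisely those factoring through $\Gal(K^+/\Q)=G/\langle c\rangle$, so the same formula applied to $K^+$ yields $|d_{K^+}|=\prod_{\chi\text{ even}}\mathfrak f(\chi)$. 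Consequently
\[
\frac{|d_{K^+}|}{|d_K|}=\frac{1}{\prod_{\chi\text{ odd}}\mathfrak f(\chi)}=:\frac1Q ,
\]
and it suffices to prove that $Q\to+\infty$ as $m_K\to+\infty$.

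The key step, which I expect to be the main technical obstacle, is the divisibility $m_K\mid Q$. Write $m_K=\prod_{i=1}^{l}p_i^{\alpha_i}$ as in Theorem \ref{3.9}, fix a prime $p=p_i$ and set $a=\alpha_i=v_p(m_K)$. Since $m_K$ is the least common multiple of the conductors $\mathfrak f(\chi)$, there is a character $\chi_0$ with $v_p(\mathfrak f(\chi_0))=a$. I would then produce an odd character with the same $p$-adic valuation of conductor. Fixing any odd character $\psi$ (one exists because $c\neq 1$ and $G$ is abelian), three cases occur: if $v_p(\mathfrak f(\psi))=a$ then $\psi$ itself works; if $\chi_0$ is odd it works directly; otherwise $\chi_0$ is even and $v_p(\mathfrak f(\psi))<a$, and then $\chi_0\psi$ is odd with $v_p(\mathfrak f(\chi_0\psi))=a$. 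This last equality is exactly the local fact that the conductor of a product of two Dirichlet characters at $p$ equals the larger of the two $p$-conductors when these valuations differ --- the point that must be justified carefully. In every case one obtains an odd character $\chi^{(p)}$ with $p^{a}\mid\mathfrak f(\chi^{(p)})$, and since $\mathfrak f(\chi^{(p)})$ is one of the factors of $Q$ we get $p^{\alpha_i}\mid Q$. Letting $p=p_i$ run over all prime divisors of $m_K$ and using that these prime powers are pairwise coprime yields $m_K=\prod_i p_i^{\alpha_i}\mid Q$.

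Finally, from $m_K\mid Q$ and $Q>0$ I conclude $Q\ge m_K$, whence
\[
\frac{|d_{K^+}|}{|d_K|}=\frac1Q\le\frac1{m_K}\underset{m_K\to+\infty}{\longrightarrow}0 .
\]
This even yields the explicit bound $|d_{K^+}|/|d_K|\le 1/m_K$. The only delicate ingredient is the behaviour of conductors under multiplication of characters used in the previous paragraph; everything else is formal once the conductor--discriminant formula and the decomposition of $\widehat G$ into even and odd characters are in place.
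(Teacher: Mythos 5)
Your proof is correct, but it takes a genuinely different route from the paper. The paper applies the explicit discriminant formula of Theorem \ref{3.9} to both $K$ and $K^+$, writes $|d_K|/|d_{K^+}|=\prod_i p_i^{[K^+:\Q](\alpha_i+\lambda_i'-2\lambda_i)}$, bounds this below by $\prod_i p_i^{[K^+:\Q](\alpha_i-\lambda_i)}$ using $\lambda_i'\ge\lambda_i$ and the bound $\lambda_i\le 2$ of Corollary \ref{3.10}, and then splits into cases according to whether the $p_i$ are bounded. You instead invoke the conductor--discriminant formula, identify $\Gal(K^+/\Q)^{\wedge}$ with the even characters of $\Gal(K/\Q)$, and reduce everything to the divisibility $m_K\mid\prod_{\chi\ \mathrm{odd}}\mathfrak f(\chi)$, which you establish correctly via the standard fact that $m_K$ is the lcm of the $\mathfrak f(\chi)$ together with the local rule $v_p(\mathfrak f(\chi_0\psi))=\max(v_p(\mathfrak f(\chi_0)),v_p(\mathfrak f(\psi)))$ when the two valuations differ; your three cases do exhaust all possibilities since $v_p(\mathfrak f(\psi))\le v_p(m_K)$ always. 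What your approach buys: a uniform, fully explicit bound $|d_{K^+}|/|d_K|\le 1/m_K$ with no case distinction on bounded versus unbounded primes, and no reliance on the $\lambda_i$ apparatus --- in particular it sidesteps the paper's implicit assumption that the conductor of $K^+$ is supported on the same primes with the same exponents $\alpha_i$ as that of $K$, which is not automatic. What the paper's approach buys: it stays entirely inside the framework of Theorem \ref{3.9} already set up for Lemma \ref{3.11} and the final theorem, and its lower bound $\prod_i p_i^{[K^+:\Q](\alpha_i-\lambda_i)}$ grows with the degree $[K^+:\Q]$ in the exponent, which is quantitatively stronger than $m_K$ when the degree is large. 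Either argument suffices for the limit statement.
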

\begin{proof}

    According to Theorem \ref{3.9} applied for $K$ and $K^+$  we have:
    %\begin{equation*}
    $|d_K|=(\prod_{i=1}^lp_i^{\alpha_i-\lambda_i})^{[K:\Q]}$
    %\end{equation*}
    and
    %\begin{equation*}
        $|d_{K^+}|=(\prod_{i=1}^lp_i^{\alpha_i-\lambda_i'})^{[K^+:\Q]}$. %By definition 
       %$\lambda_i'=\lambda_i\ge$
        %\end{equation*}
    Then,
    \begin{equation*}
         \frac{|d_K|}{|d_{K^+}|}=\prod_{i=1}^lp_i^{[K^+:\Q](\alpha_i+\lambda_i'-2\lambda_i)}\ge \prod_{i=1}^lp_i^{[K^+:\Q](\alpha_i-\lambda_i)},
    \end{equation*}
because $\lambda_i'\ge\lambda_i$ by definition.
   If the $p_i$'s are bounded, then at least one $\alpha_i$ tends to $+\infty$ when $m_K$ tends to $+\infty$ and $\frac{|d_K|}{|d_{K^+}|}\underset{m_K\to+\infty}{\to}+\infty$. If the $p_i$'s are not bounded, because $\alpha_i-\lambda_i\ge1$, we have \begin{equation*}
       \underset{m_K\to+\infty}{\lim}\prod_{i=1}^lp_i^{[K^+:\Q](\alpha_i-\lambda_i)}=+\infty,
   \end{equation*}
   and we can conclude.

 \end{proof}
 \begin{remrk}
     Let $m\in\N^*$, there exist a finite number of abelian number fields with conductor $m_K\le m$.
 \end{remrk}
\begin{proof} (Theorem \ref{3.8}) With the notations of Theorem \ref{3.9}, let us consider the family of abelian CM fields. By Lemma \ref{3.11} and the Brauer-Siegel Theorem \cite{Lang}, we have 
\begin{equation*}
    \frac{\log(h_KR_K)}{\log(\sqrt{|d_K|)}}\underset{m_K\to +\infty}{\to} 1,
\end{equation*}
where $R_K$ denotes the regulator of the field $K$ and $h_K$ denotes the class number of $K$. 

Similarly we have
\begin{equation*}
        \frac{\log(h_K^+R_K^+)}{\log(\sqrt{|d_K^+|)}}\underset{m_{K^+}\to +\infty}{\to} 1,
\end{equation*}
where $h_K^+$, $R_K^+$ and $d_K^+$ are the class number, the regulator and the discriminant of the maximal real subfield $K^+$ of $K$. Let $h_K^-=h_K/h_K^+$, then 
\begin{equation}
    \frac{\log(h_KR_K)}{\log(\sqrt{|d_K|)}}=\frac{\log(h_K^+)+\log(h_K^-)+\log(R_K)}{\log(\sqrt{|d_K|})}.
\end{equation}
For CM fields, the quotient of $R_K$ by $R_K^+$ is well known (see for instance \cite{wash} Proposition 4.16):
\begin{equation*}
    \frac{R_K}{R_K^+}=\frac{1}{Q}2^{\frac{1}{2}[K:\Q]-1},
\end{equation*} 
with $Q=1$ or 2,
    and so the equation (1) becomes:
    \begin{equation*}
         \frac{\log(h_KR_K)}{\log(\sqrt{|d_K|)}}=\frac{\log(h_K^+)+\log(h_K^-)+\log(R_K^+)+(\frac{1}{2}[K:\Q]-1)\log(2)-\log(Q)}{\log(\sqrt{|d_K|})}.
    \end{equation*}
    By Lemma \ref{3.11}
    \begin{equation*}
        \frac{(\frac{1}{2}[K:\Q]-1)\log(2)-\log(Q)}{\log(\sqrt{|d_K|})}\underset{m_K\to +\infty}{\to}0,
    \end{equation*}
    and by Lemma \ref{3.12},
    \begin{equation*}
        \frac{\log(h_K^+)+\log(R_K^+)}{\log(\sqrt{|d_K|})}\underset{m_K\to +\infty}{\to}0.
    \end{equation*}
Finally we have
\begin{equation*}
    \frac{\log(h_K^-)}{\log(\sqrt{|d_K|)}}\underset{m_K\to +\infty}{\to}1.
\end{equation*}
So, in term of growth, $h_K^-\sim \sqrt{|d_K|}$.\\
\end{proof}
\begin{thm}
      Let $K$ runs through an infinite sequence of abelian fields. Then:
   \begin{equation*}
       \frac{|Po(K)|}{\sqrt{|d_K|}}\underset{m_K\to+\infty}{\to}0.
   \end{equation*}
\end{thm}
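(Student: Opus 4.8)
The plan is to prove the stronger statement that
\begin{equation*}
\frac{\log|Po(K)|}{\log\sqrt{|d_K|}}\underset{m_K\to+\infty}{\longrightarrow}0 .
\end{equation*}
Along any infinite sequence of distinct abelian fields we have $|d_K|\to+\infty$ (by Hermite's theorem only finitely many number fields have bounded discriminant), so $\log\sqrt{|d_K|}\to+\infty$; combined with the displayed limit this gives $\log|Po(K)|-\log\sqrt{|d_K|}=(o(1)-1)\log\sqrt{|d_K|}\to-\infty$, which is exactly $|Po(K)|/\sqrt{|d_K|}\to0$. So the whole problem reduces to a comparison of logarithms.

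Write $n=[K:\Q]$, let $p_1,\dots,p_l$ be the primes ramified in $K$, so that $m_K=\prod_i p_i^{\alpha_i}$, and set $e_{p_i}:=e_{p_i}(K/\Q)$. For the numerator I would use that $Po(K)$ is generated by the classes of the $l$ ambiguous ideals $\sqrt{p_i\mathcal O_K}$; since $\sqrt{p_i\mathcal O_K}^{\,e_{p_i}}=p_i\mathcal O_K$ is principal (as in the proof of Lemma \ref{l1.5}), each generator has order dividing $e_{p_i}$, whence $|Po(K)|\le\prod_{i=1}^l e_{p_i}$, and using $e_{p_i}\mid n$ this yields $\log|Po(K)|\le l\log n$. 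For the denominator I would invoke Theorem \ref{3.9}, which gives $\log\sqrt{|d_K|}=\tfrac n2\sum_{i=1}^l(\alpha_i-\lambda_i)\log p_i$, together with the lower bound $n(\alpha_i-\lambda_i)=v_{p_i}(d_K)\ge n(1-1/e_{p_i})\ge n/2$: indeed, among the $n$ characters of $\Gal(K/\Q)$ exactly $n/e_{p_i}$ are trivial on the inertia at $p_i$, and each of the remaining $n(1-1/e_{p_i})$ ramified characters contributes at least $1$ to the conductor–discriminant product (one may also read $\alpha_i-\lambda_i\ge\tfrac12$ directly off the formula of Corollary \ref{3.10}). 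Hence $\log\sqrt{|d_K|}\ge\tfrac n4\sum_{i=1}^l\log p_i$, and combining the two estimates
\begin{equation*}
\frac{\log|Po(K)|}{\log\sqrt{|d_K|}}\le\frac{4\,l\log n}{\,n\sum_{i=1}^l\log p_i\,}.
\end{equation*}

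It remains to show this last ratio tends to $0$, and this is the crux. The two elementary inputs are that $\frac{\log n}{n}\to0$ as $n\to\infty$ while staying bounded for $n\ge2$, and that $\sum_{i=1}^l\log p_i\ge\log(l!)$ because the $p_i$ are distinct primes, so that $\frac{l}{\sum_i\log p_i}$ is bounded (by $1/\log 2$) and tends to $0$ as $l\to\infty$. If $n\to+\infty$ or $l\to+\infty$ along the sequence, one factor vanishes in the limit and the other stays bounded, so the ratio tends to $0$. The delicate case is when $n$ and $l$ both stay bounded: here I would use that $e_{p_i}=u_i\,p_i^{\alpha_i-1}\ge p_i^{\alpha_i-1}$ (from the definition of $u_i$ in Theorem \ref{3.9}) together with $e_{p_i}\le n$, which forces every exponent $\alpha_i$ to be bounded; then $m_K=\prod_i p_i^{\alpha_i}\to+\infty$ is only possible if $\max_i p_i\to+\infty$, so $\sum_i\log p_i\ge\log(\max_i p_i)\to+\infty$ with $l$ bounded, and the ratio again tends to $0$.

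The main obstacle is precisely this final step: the estimate must hold simultaneously in the three ways the conductor can grow — growing degree $n$, growing number of ramified primes $l$, or growing primes $p_i$ — and no single one of the factors $\tfrac{\log n}{n}$, $\tfrac{l}{\sum_i\log p_i}$ controls all of them. What makes the argument close is the pair of structural facts $e_{p_i}\le n$ (each prime's contribution to $Po(K)$ is capped by the degree) and $e_{p_i}\ge p_i^{\alpha_i-1}$ (large conductor exponents cannot occur at bounded degree), which together rule out the only configuration in which $|Po(K)|$ could keep pace with $\sqrt{|d_K|}$.
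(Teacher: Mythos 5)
Your argument is correct and follows essentially the same route as the paper: the same key bound $|Po(K)|\le [K:\Q]^l$ (each generator $\sqrt{p_i\mathcal{O}_K}$ killed by its ramification index), the same discriminant formula from Theorem \ref{3.9}, and a case analysis on how $m_K$ can grow. Your version is in fact more carefully executed than the paper's — in particular, the explicit lower bound $\alpha_i-\lambda_i\ge 1-1/e_{p_i}\ge \tfrac12$ and the use of $e_{p_i}\ge p_i^{\alpha_i-1}$ to dispose of the case where both $[K:\Q]$ and $l$ stay bounded supply details that the paper compresses into an unjustified ``$\gg$''.
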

\begin{proof}
We know that $|Po(K)|$ divides $[K:\Q]^{l}$,  where $l$ is such that $m_K=\prod_{i=1}^lp_i^{\alpha_i}$, and we want to show that 
\begin{equation*}
    \frac{[K:\Q]^{l}}{\sqrt{|d_K|}}\underset{m_K\to+\infty}{\to}0.
\end{equation*}

By Theorem \ref{3.9} $\sqrt{|d_K|}=(\prod_{i=1}^lp_i^{\frac{\alpha_i-\lambda_i}{2}})^{[K:\Q]}$, so if the $p_i$'s are bounded, then at least one $\alpha_i$ tends to $+\infty$ when $m_K$ tends to $+\infty$ and 
\begin{equation*}
    \underset{m_K\to+\infty}{\lim}\frac{[K:\Q]^{l}}{\sqrt{|d_K|}}=0.
\end{equation*}
If the $p_i$'s are not bounded, then $(\prod_{i=1}^lp_i^{\frac{\alpha_i-\lambda_i}{2}})^{[K:\Q]}>> [K:\Q]^l$ and 
\begin{equation*}
    \underset{m_K\to+\infty}{\lim}\frac{[K:\Q]^{l}}{\sqrt{|d_K|}}=0.
\end{equation*}
Finally,
\begin{equation*}
    \underset{m_K\to+\infty}{\lim}\frac{|Po(K)|}{\sqrt{|d_K|}}=0.
\end{equation*}

\end{proof}
\begin{coro}
   There exist a finite number of abelian CM fields $K/\Q$ such that $$\Tilde{Cl(K)}=\{0\}.$$
\end{coro}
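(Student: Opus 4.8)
The plan is to reduce the assertion to a comparison of growth rates and then to combine Theorem~\ref{3.8} with the upper bound for the Pólya group coming from the preceding Theorem. Since $\widetilde{Cl(K)}=Cl(K)/Po(K)$ and $Po(K)\subseteq Cl(K)$, the quotient is trivial exactly when $h_K=|Po(K)|$. By the Remark preceding Theorem~\ref{3.8}, there are only finitely many abelian fields of bounded conductor, so it is enough to prove that $|Po(K)|<h_K$ for every abelian CM field of sufficiently large conductor $m_K$. I would argue by contradiction: were $h_K=|Po(K)|$ to hold for infinitely many abelian CM fields, they would form an infinite sequence with $m_K\to+\infty$, to which the asymptotic estimates apply.

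First I would record the lower bound on the class number. Because $h_K=h_K^+h_K^-\ge h_K^-$, Theorem~\ref{3.8} gives along such a sequence
\[
\frac{\log h_K}{\log\sqrt{|d_K|}}\ \ge\ \frac{\log h_K^-}{\log\sqrt{|d_K|}}\underset{m_K\to+\infty}{\longrightarrow}1 ,
\]
so that $\liminf \log h_K/\log\sqrt{|d_K|}\ge 1$.

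The core of the proof is the matching statement for the Pólya group,
\[
\frac{\log|Po(K)|}{\log\sqrt{|d_K|}}\underset{m_K\to+\infty}{\longrightarrow}0 .
\]
Recall from the proof of the preceding Theorem that $|Po(K)|$ divides $[K:\Q]^{\,l}$, hence $\log|Po(K)|\le l\log[K:\Q]$. By Theorem~\ref{3.9}, setting $D:=\sum_{i=1}^{l}(\alpha_i-\lambda_i)\log p_i$ one has $\log\sqrt{|d_K|}=\tfrac12[K:\Q]\,D$. Each $p_i$ ramifies in $K$, and a small refinement of Corollary~\ref{3.10} using this fact gives $\alpha_i-\lambda_i\ge\tfrac12$, whence $D\ge\tfrac12\sum_{i}\log p_i=\tfrac12\log\prod_{i}p_i$. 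As the $p_i$ are distinct primes, $\prod_i p_i$ is at least the product of the first $l$ primes, so $D\gg l\log l$ when $l\to+\infty$, while $D\to+\infty$ when $l$ stays bounded (these are exactly the mechanisms behind Lemma~\ref{3.11}); in either regime $2l/D\to 0$. Using also $\log[K:\Q]\le[K:\Q]$ I then obtain
\[
\frac{\log|Po(K)|}{\log\sqrt{|d_K|}}\ \le\ \frac{l\log[K:\Q]}{\tfrac12[K:\Q]\,D}\ =\ \frac{2l}{D}\cdot\frac{\log[K:\Q]}{[K:\Q]}\ \le\ \frac{2l}{D}\underset{m_K\to+\infty}{\longrightarrow}0 .
\]
Comparing the two displays, $h_K=|Po(K)|$ would force $1\le 0$, a contradiction; thus $|Po(K)|<h_K$ once $m_K$ is large, and the finiteness of abelian fields of bounded conductor yields the corollary.

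I expect the only delicate point to be the uniform estimate $2l/D\to 0$. The awkward regime is that of small degree but many ramified primes --- for instance imaginary quadratic fields, where $[K:\Q]=2$ while $l$ may be large, so that a crude bound such as $l/[K:\Q]$ does not tend to $0$. This is rescued precisely by the distinctness of the ramified primes, which forces $\sum_i\log p_i$ to grow faster than $l$; making this quantitative (via the prime number theorem, as implicitly in Lemma~\ref{3.11}) is the one calculation requiring care.
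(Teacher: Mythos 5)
Your overall strategy is the one the paper intends: combine the growth of $h_K^-$ from Theorem~\ref{3.8} with the smallness of $|Po(K)|$ from the preceding Theorem, then invoke the finiteness of abelian fields of bounded conductor. The paper, however, states the corollary with no proof at all, and the two results it would juxtapose do not literally close the argument: Theorem~\ref{3.8} is a \emph{logarithmic} asymptotic ($\log h_K^-/\log\sqrt{|d_K|}\to 1$, which only gives $h_K^-=|d_K|^{1/2+o(1)}$), while the Pólya bound is the non-logarithmic statement $|Po(K)|/\sqrt{|d_K|}\to 0$; these are compatible with $h_K=|Po(K)|$ (e.g.\ both could equal $\sqrt{|d_K|}/\log|d_K|$). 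You correctly sense this and repair it by upgrading the Pólya estimate to the logarithmic scale, proving $\log|Po(K)|/\log\sqrt{|d_K|}\to 0$ from $|Po(K)|\mid[K:\Q]^l$ and the discriminant formula of Theorem~\ref{3.9}; that is exactly the strengthening needed to make the comparison with Theorem~\ref{3.8} valid, so your write-up is more rigorous than the implicit argument in the paper.

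The one point you leave unproved is the ``small refinement'' $\alpha_i-\lambda_i\ge\tfrac12$, on which the whole estimate $2l/D\to 0$ rests (as you note, the crude bound $\alpha_i-\lambda_i\ge 1/[K:\Q]$ would reintroduce the degree and ruin the quadratic case). The claim is true and you should justify it: $(\alpha_i-\lambda_i)[K:\Q]$ is the exponent of $p_i$ in $d_K$, and by the conductor--discriminant formula this exponent is at least the number of characters of $K$ whose conductor is divisible by $p_i$, which is $[K:\Q](1-1/e_{p_i})\ge[K:\Q]/2$ since $p_i$ ramifies. With that inserted, your proof is complete.
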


Concerning real quadratic fields, the problem is different since we do not control the behavior of $R_K$. The regulator may have the same growth asymptotically as $\sqrt{|d_K|}$.  

\begin{conj}
    There exists an infinite sequences of real quadratic fields such that 
    \begin{equation*}
        \underset{m_K\to+\infty}{\lim}\frac{\log(R_K)}{\log(\sqrt{|d_K|})}=1.
    \end{equation*}
\end{conj}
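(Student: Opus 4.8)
The plan is to follow the pattern of Theorem \ref{3.8}, but with the roles reversed: instead of bounding $|Po(K)|$ from above I would bound $R_K$ from below, reducing the conjecture to a statement about class numbers alone. First I would record the Dirichlet class number formula for a real quadratic field $K=\Q(\sqrt d)$, namely
\begin{equation*}
    h_K R_K=\tfrac12\sqrt{|d_K|}\,L(1,\chi),
\end{equation*}
the real analogue of the formula cited from \cite{wash}, where $R_K=\log\varepsilon_K$ and $\varepsilon_K$ is the fundamental unit. Applying the Brauer–Siegel Theorem \cite{Lang} exactly as in the proof of Theorem \ref{3.8} (the degree here being fixed equal to $2$) gives, unconditionally,
\begin{equation*}
    \frac{\log(h_K R_K)}{\log\sqrt{|d_K|}}\underset{|d_K|\to+\infty}{\longrightarrow}1 .
\end{equation*}
Since $\log R_K=\log(h_K R_K)-\log h_K$, dividing by $\log\sqrt{|d_K|}$ shows that the conjecture is \emph{equivalent} to the existence of an infinite sequence of real quadratic fields along which
\begin{equation*}
    \frac{\log h_K}{\log\sqrt{|d_K|}}\underset{|d_K|\to+\infty}{\longrightarrow}0 ,
\end{equation*}
that is, along which the class number is subpolynomial, $h_K=|d_K|^{o(1)}$.

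The second step is to exhibit such a sequence, and here the cleanest sufficient condition is the existence of infinitely many real quadratic fields of class number one. Along any such sequence $\log h_K=0$; moreover Siegel's (ineffective) lower bound $L(1,\chi)\gg_\varepsilon|d_K|^{-\varepsilon}$ forces $R_K\gg_\varepsilon|d_K|^{1/2-\varepsilon}$, while the elementary upper bound $L(1,\chi)\ll\log|d_K|$ gives $R_K\ll\sqrt{|d_K|}\log|d_K|$, so that $\log R_K=\tfrac12\log|d_K|+o(\log|d_K|)$ at once. More generally any family with $h_K=|d_K|^{o(1)}$ works, so it would suffice to produce real quadratic fields whose fundamental unit is genuinely large, $\varepsilon_K\ge\exp\!\big(|d_K|^{1/2-o(1)}\big)$, equivalently whose continued-fraction period is long.

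The hard part, and the reason the statement is only a conjecture, is precisely this second step: there is at present no unconditional mechanism forcing a polynomially large regulator along an explicit infinite family. The only unconditional lower bound on the regulator is the logarithmic $R_K\gg\log|d_K|$ coming from the size of the smallest Pell solution, which is far too weak, and the classical families whose units are fully understood — the Richaud–Degert types $d=m^2\pm1$, $m^2\pm4$ — have \emph{small} units, $R_K\ll\log|d_K|$, driving the ratio to $0$ rather than to $1$. A rigorous proof thus appears to require the infinitude of real quadratic fields of bounded class number, a well-known open problem predicted by the Cohen–Lenstra heuristics (which even suggest a positive proportion have $h_K=1$); conditionally on that prediction the conjecture follows immediately from the first step. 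I expect no real help from the generalized Riemann hypothesis here, since it controls $L(1,\chi)$ only up to factors of $\log\log|d_K|$ and therefore pins down the \emph{product} $h_K R_K$ without separating the two factors — and it is exactly this separation, governed by the erratic distribution of fundamental units, that constitutes the genuine obstacle.
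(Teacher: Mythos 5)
The statement you were asked to prove is labelled a \emph{conjecture} in the paper, and the paper supplies no proof of it; on the contrary, the surrounding text explicitly concedes that ``we do not control the behavior of $R_K$,'' and the only results proved in its vicinity go in the \emph{opposite} direction: using Lemmas \ref{3.18} and \ref{3.20}, the paper constructs two infinite Richaud--Degert-type sequences, $K=\Q(\sqrt{n^2+1})$ and $K=\Q(\sqrt{4n^2-1})$, along which $\log(R_K)/\log(\sqrt{|d_K|})\to 0$. So there is no paper proof to compare against, and your proposal is, as you candidly say yourself, not a proof either. Judged on what it actually asserts, however, it is correct and adds genuine content beyond the paper's framing: the class number formula $h_KR_K=\tfrac12\sqrt{|d_K|}\,L(1,\chi)$ together with Brauer--Siegel (unconditional here, the degree being fixed at $2$, and with $m_K=|d_K|$ for quadratic fields so the limit variables agree) gives $\log(h_KR_K)/\log\sqrt{|d_K|}\to 1$, and since both $\log h_K$ and $\log R_K$ are nonnegative for large $|d_K|$, the conjecture is indeed \emph{equivalent} to the existence of an infinite family with $h_K=|d_K|^{o(1)}$ --- in particular it would follow from the open Gauss/Cohen--Lenstra prediction of infinitely many real quadratic fields of class number one.

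Your diagnosis of the obstruction is also accurate and consistent with the paper: the only unconditional regulator lower bound is logarithmic; GRH pins down only the product $h_KR_K$ and cannot separate the factors; and the families whose fundamental units are explicitly understood are precisely the small-regulator ones $d=n^2\pm 1$, $4n^2-1$, i.e.\ exactly those the paper exploits via Louboutin's theorem for its limit-$0$ construction. In short: there is no gap in what you claim, your conditional reduction is sound and sharper than anything the paper states about this conjecture, but the statement itself remains unproved --- which is faithful to its status in the paper.
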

In fact, we can construct two infinite sequence of real abelian quadratic fields such that $$\frac{\log(R_K)}{\log(\sqrt{|d_K|})}\underset{m_K\to+\infty}{\to}0.$$ For this we will give some preliminary results.
\begin{thm}[\cite{Esterman} or \cite{HB}]\label{3.17}
    There exist infinitely many integers $n\in\N$ such that $n^2+1$ is square-free.
\end{thm}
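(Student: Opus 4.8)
The plan is to prove the stronger statement that the set of $n\in\N$ with $n^2+1$ squarefree has positive density, from which infinitude follows immediately. Fix a large $N$ and set
\[
  B(N)=\#\{\,1\le n\le N : n^2+1 \text{ is not squarefree}\,\}.
\]
If $n^2+1$ is not squarefree then $p^2\mid n^2+1$ for some prime $p$, so a union bound gives
\[
  B(N)\le \sum_{p} A_p(N),\qquad A_p(N):=\#\{\,1\le n\le N : p^2\mid n^2+1\,\},
\]
the sum being over all primes (only finitely many terms are nonzero for fixed $N$). I would reduce everything to estimating $A_p(N)$ and then establishing $B(N)\le cN$ with a constant $c<1$.

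First I would determine for which $p$ the congruence $x^2\equiv -1\pmod{p^2}$ is solvable. Since $n^2+1\equiv 1,2\pmod 4$ one never has $4\mid n^2+1$, so $A_2(N)=0$. For odd $p$, the congruence $x^2\equiv -1\pmod p$ is solvable iff $p\equiv 1\pmod 4$; in that case its two roots lift uniquely by Hensel's lemma to exactly two roots modulo $p^2$, whereas for $p\equiv 3\pmod 4$ there are none. Hence $A_p(N)=0$ unless $p\equiv 1\pmod 4$, and then each of the two residue classes modulo $p^2$ meets $[1,N]$ in at most $N/p^2+1$ integers, so
\[
  A_p(N)\le 2\Bigl(\frac{N}{p^2}+1\Bigr).
\]

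Next I would split the sum at $\sqrt N$. For $p\le\sqrt N$ the term $2N/p^2$ dominates, and summing gives
\[
  \sum_{p\le\sqrt N} A_p(N)\le 2N\!\!\sum_{p\equiv 1\,(4)}\frac{1}{p^2}+2\,\pi(\sqrt N)=2NS+O(\sqrt N),
\]
where $S:=\sum_{p\equiv 1\,(4)}p^{-2}$. For $p>\sqrt N$ one has $p^2>N$, so each solution class meets $[1,N]$ at most once and $A_p(N)\le 2$; moreover $A_p(N)=0$ as soon as $p>\sqrt{N^2+1}$, so only primes up to about $N$ are relevant, and by Chebyshev's bound $\pi(x)=O(x/\log x)$ their number is $O(N/\log N)$. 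Thus $\sum_{p>\sqrt N}A_p(N)=O(N/\log N)=o(N)$, and altogether $B(N)\le 2NS+o(N)$.

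Finally I would bound the constant: $S<\sum_{n\ge 5}n^{-2}<1/4$, so $2S<1/2$ and
\[
  \#\{\,1\le n\le N : n^2+1 \text{ squarefree}\,\}=N-B(N)\ge (1-2S)N-o(N)\underset{N\to+\infty}{\longrightarrow}+\infty,
\]
which proves the claim. The delicate point is the range $p>\sqrt N$: there the heuristic density $N/p^2$ drops below $1$, so the crude bound $A_p(N)\le 2$ must be summed over many primes, and some prime-counting input ($\pi(x)=o(x)$) is genuinely needed to keep this contribution $o(N)$. It is precisely this range that Estermann and Heath-Brown handle more carefully, replacing the union bound by an inclusion--exclusion over squarefree moduli to obtain the exact density $\prod_{p\equiv 1\,(4)}(1-2p^{-2})$ together with a power-saving error term; for mere infinitude the argument above is enough.
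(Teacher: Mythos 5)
Your argument is correct: the union bound over primes with $x^2\equiv-1\pmod{p^2}$ solvable, the Hensel lifting giving exactly two roots modulo $p^2$ for $p\equiv 1\pmod 4$ and none otherwise, the split at $\sqrt{N}$ with Chebyshev handling the tail, and the numerical bound $2S<1/2$ all check out, so you do obtain positive lower density and hence infinitude. The comparison with the paper is slightly unusual here: for Theorem \ref{3.17} itself the paper offers no proof at all, simply citing Estermann and Heath-Brown, so your write-up supplies a self-contained argument where the paper defers to the literature. However, the paper runs essentially the same sieve for the companion statement that $4n^2-1$ is squarefree for infinitely many $n$ (defining $S_N$ and $S_{N,p}$, bounding $|S_{N,p}|\le 2+2N/p^2$, and using $\sum_{p\ge 3}p^{-2}\le 5/12$ to get $|S_N|\ge N/6+o(N)$), so your method is the one the author would presumably have used. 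Your version is in two respects tighter than the paper's template: you exploit the quadratic-residue condition to restrict to $p\equiv 1\pmod 4$, which improves the constant from $1-2\sum_{p\ge 3}p^{-2}$ to $1-2S$ with $S<1/4$, and you make explicit the point the paper glosses over, namely that the $O(1)$ term per prime must be summed over all primes up to roughly $N$ (not just up to $\sqrt{N}$) and that some prime-counting input is needed to keep that contribution $o(N)$; Chebyshev suffices, whereas the paper invokes the prime number theorem. Your closing remark correctly identifies what the cited references add beyond this: an exact density and a power-saving error term, neither of which is needed for the application in the paper.
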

\begin{lem}\label{3.18}
    There exist infinitely many integers $n$ such that $\varepsilon:=n+\sqrt{n^2+1}$ is the fundamental unit of the real quadratic field $\Q(\sqrt{n^2+1})$, where $n^2+1$ is square-free.
\end{lem}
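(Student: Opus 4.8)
The plan is to prove that for \emph{odd} $n$ with $n^2+1$ squarefree the element $\varepsilon$ is automatically fundamental, and then to produce infinitely many such $n$. First I would record that $\varepsilon$ is a unit: since
$N_{\Q(\sqrt{n^2+1})/\Q}(\varepsilon)=(n+\sqrt{n^2+1})(n-\sqrt{n^2+1})=n^2-(n^2+1)=-1$,
the element $\varepsilon$ lies in $\mathcal{O}_K^\times$ and $\varepsilon>1$. The parity of $n$ is crucial here. If $n$ is odd then $d:=n^2+1\equiv 2\pmod 4$, so $\mathcal{O}_K=\Z[\sqrt d]$; whereas if $n$ is even then $d\equiv 1\pmod 4$ and $\mathcal{O}_K=\Z\left[\frac{1+\sqrt d}{2}\right]$ is strictly larger, and then $\varepsilon$ may fail to be fundamental (already for $n=2$, $d=5$, one has $\varepsilon=2+\sqrt 5=\left(\frac{1+\sqrt 5}{2}\right)^3$). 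For this reason I would restrict attention to odd $n$ from the outset.

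Assume now $n$ is odd and $d=n^2+1$ is squarefree, so that $\mathcal{O}_K=\Z[\sqrt d]$ and every unit greater than $1$ has the form $x+y\sqrt d$ with $x,y$ positive integers satisfying $x^2-dy^2=\pm 1$. Write the fundamental unit as $\varepsilon_0=x_0+y_0\sqrt d$ with $x_0,y_0\ge 1$. Every unit greater than $1$ is a power $\varepsilon_0^{\,k}$ with $k\ge 1$, and expanding the product $\varepsilon_0^{\,k+1}=(x_0+y_0\sqrt d)(x_k+y_k\sqrt d)$ shows that the coefficient of $\sqrt d$ in $\varepsilon_0^{\,k}$ is strictly increasing with $k$ (since $x_0,y_0,x_k\ge 1$). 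Hence the fundamental unit is exactly the unit greater than $1$ whose $\sqrt d$-coefficient is minimal. Since $\varepsilon=n+\sqrt d$ has $\sqrt d$-coefficient equal to $1$, the least possible value for a positive integer, minimality forces $\varepsilon=\varepsilon_0$. Thus $\varepsilon$ is the fundamental unit of $\Q(\sqrt{n^2+1})$ for every odd $n$ with $n^2+1$ squarefree.

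It remains to exhibit infinitely many odd $n$ with $n^2+1$ squarefree, and this is where I expect the main obstacle to lie: Theorem \ref{3.17} as stated only asserts the existence of infinitely many $n$ with $n^2+1$ squarefree, with no control on parity, and a priori such $n$ could all be even. I would therefore appeal to the quantitative form of that result (Estermann, resp. Heath-Brown), namely that the number of $n\le X$ with $n^2+1$ squarefree is $\sim cX$ for a positive constant $c$. For odd $n$ one has $n^2+1\equiv 2\pmod 4$, so $4\nmid n^2+1$ and there is no local obstruction at the prime $2$; running the same sieve restricted to the progression $n\equiv 1\pmod 2$ yields a positive proportion of odd $n$ with $n^2+1$ squarefree, and in particular infinitely many, which completes the proof. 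The genuinely delicate point is thus not the algebraic identification of the fundamental unit, which is elementary once the parity is fixed, but the analytic input guaranteeing infinitely many admissible $n$ of the required parity.
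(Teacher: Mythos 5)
Your proof is correct, but it follows a genuinely different route from the paper's, and the two trade difficulties in opposite places. The paper does \emph{not} restrict to odd $n$: it takes all $n$ furnished by Theorem~\ref{3.17}, handles the case $n^2+1\equiv 2\bmod 4$ by citing Louboutin's theorem on fundamental units of orders $\Z[\varepsilon]$, and handles the case $n^2+1\equiv 1\bmod 4$ (even $n$, where the ring of integers is strictly larger than $\Z[\sqrt{n^2+1}]$) by a discriminant inequality: if $\varepsilon=\eta^m$ with $m\ge 2$ for the fundamental unit $\eta$, comparing $d_\varepsilon=4(n^2+1)$ with $d_\eta$ forces $\eta\le 1+\sqrt2$ while also $\eta>\tfrac12(n+\sqrt{n^2+1})-1$, a contradiction for $n$ large. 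Your example $\varepsilon=2+\sqrt5=\bigl(\tfrac{1+\sqrt5}{2}\bigr)^3$ is exactly the phenomenon that argument is designed to exclude for large even $n$. Your approach instead discards the even case entirely, which buys you a completely elementary identification of the fundamental unit (the minimal-$\sqrt d$-coefficient argument, with no appeal to Louboutin), but at the cost of needing infinitely many \emph{odd} $n$ with $n^2+1$ squarefree --- a strengthening of Theorem~\ref{3.17} as stated. You correctly flag this and your fix is sound: the Estermann/Heath-Brown count is $\sim cX$ with $c=\prod_{p\equiv 1\bmod 4}\bigl(1-2/p^2\bigr)>\tfrac12$, so one can even conclude by pigeonhole without rerunning the sieve in the odd progression. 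So both proofs work; the paper's uses the cited squarefree theorem verbatim but leans on Louboutin plus an ad hoc inequality, while yours is algebraically self-contained but quietly upgrades the analytic input.
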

\begin{proof}
    Theorem \ref{3.17} gives us infinitely many $n$ such that $n^2+1$ is square-free. Let $n$ such an integer. 
    If $n^2+1\equiv 2\mod 4$, then $\Z[\varepsilon]$ is the ring of integers of $\Q(\sqrt{n^2+1})$ and  Louboutin shows that the fundamental unit of $\Z[\varepsilon]$ is $\varepsilon$
    (\cite{Louboutin}, Thm 1). \\
    If $n^2+1\equiv1\mod 4$, %the ring of integers of $\Q(\sqrt{n^2+1})$ is $\Z[\frac{1+\sqrt{n^2+1}}{2}]$. 
    let $\eta$ be the fundamental unit of $\Q(\sqrt{n^2+1})$. Let $d_\eta$ (resp. $d_\varepsilon$) the discriminant of the order $\Z[\eta]$ (resp. $\Z[\varepsilon])$. % Because $n^2+1\equiv1\mod 4$, $d_\eta=n^2+1$ and 
    By definition, $d_\varepsilon=(\varepsilon-\varepsilon')^2$ where $\varepsilon'$ is the conjugate of $\varepsilon$. Here $\varepsilon'=n-\sqrt{n^2+1}$ and $d_\varepsilon=(\varepsilon+\varepsilon^{-1})^2=4(n^2+1)$.\\
    If $\varepsilon=\eta^m$ with $m\ge2$, since $\eta>1$:    \begin{equation*}
        (\eta^2-\eta^{-2})^2\le (\eta^m-\eta^{-m})^2\le (\varepsilon+\varepsilon^{-1})^2=d_\varepsilon\le4d_\eta\le 4(\eta+\eta^{-1})^2.
    \end{equation*}
    Then, $0\le (\eta-\eta^{-1})^2\le 4$ i.e. $\eta-\eta^{-1}\le 2$, and so $\eta\le 1+\sqrt{2}$.\\
    %Moreover, % $$\frac{d_\varepsilon}{d_\eta}=[\Z[\eta]:\Z[\varepsilon]]^2=4,$$ and $\Z[\eta]=\mathcal{O}_{\Q(\sqrt{n^2+1})}$.
   % since $N(\varepsilon)=-1,$ $N(\eta)=-1$ and the conjugate of $\eta$ is $-\eta^{-1}.$ Consequently $d_\eta=(\eta+\eta^{-1})^2$ and $(\varepsilon+\varepsilon^{-1})^2=4(\eta+\eta^{-1})^2$. 
   Hence we have:
    \begin{equation*}
        \eta^m+\eta^{-m}=\varepsilon+\varepsilon^{-1}\le2(\eta+\eta^{-1}),
    \end{equation*}
    and 
    \begin{equation*}
        \varepsilon=n+\sqrt{n^2+1}<2\eta + 2.
    \end{equation*}
    So \begin{equation*}
        \eta>\frac{n+\sqrt{n^2+1}}{2}-1 \;\text{and}\;\eta\le1+\sqrt{2}.
    \end{equation*}
    When $n$ is big enough, we obtain a contradiction and finally $\varepsilon=\eta$.
    \end{proof}
\begin{prop}
        There exist infinitely many integers  $n\in\N$ such that $4n^2-1$ is square-free.
    \end{prop}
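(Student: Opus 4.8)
The plan is to estimate, for large $N$, the counting function
\begin{equation*}
    S(N):=\#\{\,1\le n\le N : 4n^2-1 \text{ is square-free}\,\}
\end{equation*}
and to show that $S(N)\to+\infty$, which is far stronger than the bare infinitude claimed. The opening remark is that $4n^2-1=(2n-1)(2n+1)$ is always odd and that its two factors are coprime (they are odd and differ by $2$); hence $4n^2-1$ is square-free if and only if both $2n-1$ and $2n+1$ are, and in any case only odd primes can contribute a square factor, which removes the prime $2$ from every sum below.

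First I would count, for each odd prime $p$, the integers $n\le N$ with $p^2\mid 4n^2-1$. The congruence $4n^2\equiv 1\pmod{p^2}$ is equivalent to $(2n)^2\equiv 1\pmod{p^2}$; since $p$ is odd the map $n\mapsto 2n$ is a bijection of $\Z/p^2\Z$, and $(\Z/p^2\Z)^\times$ is cyclic, so there are exactly two solutions modulo $p^2$. Counting integers of $[1,N]$ lying in two fixed residue classes modulo $p^2$ then gives
\begin{equation*}
    \#\{n\le N : p^2\mid 4n^2-1\}\le 2\left(\frac{N}{p^2}+1\right).
\end{equation*}

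Next I would apply the union bound over the odd primes that can divide $4n^2-1<4N^2$, that is $p<2N$:
\begin{equation*}
    N-S(N)\le \sum_{3\le p<2N}2\left(\frac{N}{p^2}+1\right)=2N\sum_{p\ge 3}\frac{1}{p^2}+O\bigl(\pi(2N)\bigr).
\end{equation*}
The decisive numerical input is that $2\sum_{p\ge 3}p^{-2}=2\bigl(\sum_p p^{-2}-\tfrac14\bigr)<1$ (its value is about $0.40$), while $\pi(2N)=o(N)$ by Chebyshev's elementary estimate. Hence $N-S(N)\le(c+o(1))N$ with $c<1$, so that $S(N)\ge(1-c-o(1))N\to+\infty$; in particular there are infinitely many $n$ with $4n^2-1$ square-free.

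The only genuinely delicate point in a sieve of this kind is the contribution of large prime squares: to obtain a true asymptotic density one would have to truncate the Möbius sieve at some level $z$ and control the tail $\sum_{p>z}$ carefully, exactly as in the proof of Theorem~\ref{3.17} for $n^2+1$. Here, however, no such refinement is needed, because the convergent sum $2\sum_{p\ge 3}p^{-2}$ already stays below $1$ with comfortable room to spare, so the crude union bound alone yields not merely infinitude but positive density of admissible $n$.
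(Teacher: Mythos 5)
Your proof is correct and follows essentially the same route as the paper: a crude union bound over odd primes $p$, counting the two solutions of $4x^2\equiv 1\pmod{p^2}$, and exploiting the fact that $2\sum_{p\ge 3}p^{-2}<1$ together with $\pi(N)=o(N)$. The only (harmless) differences are cosmetic: you sum over $p<2N$ where the paper sums over $p\le N$, and you quote the numerical value $\approx 0.40$ where the paper uses the cruder bound $2\cdot\tfrac{5}{12}=\tfrac56<1$.
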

    \begin{proof}
       Let $N\in \N$ and $p$ a prime number. Define 
$S_N=\{n\le N\;| \;4n^2-1 \;\text{is square-free}\}$ and $S_{N,p}=\{n\le N\;| \; 4n^2\equiv1\mod p^2\}$. Then,
\begin{equation*}
    |S_N|\ge N-\sum_{3\le p\le N}S_{N,p}.
\end{equation*}
Moreover, 
\begin{align*}
    |S_{N,p}|&=\underset{4n^2\equiv 1\mod p^2}{\sum_{n\le N}}1\\
&\le(1+\left[\frac{N}{p^2}\right])\underset{4x^2\equiv1\mod p^2}{\sum_{x\mod p^2}}1\le 2+\frac{2N}{p^2}.
\end{align*}
Consequently,
\begin{equation*}
    |S_N|\ge N-2\sum_{p\le N}1 - 2N\sum_{3\le p\le N}\frac{1}{p^2}.
\end{equation*}
By prime number Theorem we obtain,
\begin{equation*}
    |S_N|\le N(1-\sum_{p\ge3}\frac{1}{p^2}) + o(N),
\end{equation*}
and \begin{equation*}
    \sum_{p\ge 3}\frac{1}{p^2}\le \sum_{n=3}^\infty \frac{1}{n^2}=\frac{\pi^2}{6}-1-\frac{1}{4}\le\frac{5}{12}.
\end{equation*}
Finally,
\begin{equation*}
    |S_N|\ge \frac{N}{6}+o(N)
\end{equation*}
which prove the proposition.
 \end{proof}
 \begin{lem}\label{3.20}
      There exist infinitely many integers $n$ such that $\varepsilon:=2n+\sqrt{4n^2-1}$ is the fundamental unit of the real quadratic field $\Q(\sqrt{4n^2-1})$, where $4n^2-1$ is square-free.
 \end{lem}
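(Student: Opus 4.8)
The plan is to follow the same strategy as in Lemma \ref{3.18}, which here simplifies considerably because only one congruence class occurs. First I would invoke the preceding Proposition to fix an integer $n$ for which $d:=4n^2-1$ is square-free, there being infinitely many such $n$. A direct computation gives $\varepsilon\varepsilon'=(2n)^2-(4n^2-1)=1$, where $\varepsilon'=2n-\sqrt{4n^2-1}$ is the conjugate, so $\varepsilon$ is a unit of norm $+1$. Since $4n^2-1\equiv 3\pmod 4$, the ring of integers of $K=\Q(\sqrt{d})$ is exactly $\Z[\sqrt{d}]=\Z[\varepsilon]$; in particular $\varepsilon$ lies in the maximal order and the discriminant of $\Z[\varepsilon]$ equals $d_K=4(4n^2-1)$. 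This is the essential simplification over Lemma \ref{3.18}: there the case $d\equiv 1\pmod 4$ forced $\Z[\varepsilon]\subsetneq\mathcal{O}_K$ and required a discriminant argument, while the case $d\equiv 2\pmod 4$ was settled by Louboutin; here every such $d$ is $\equiv 3\pmod 4$, so we are always in the ``maximal order'' situation and a single argument suffices.

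Next I would argue by contradiction. Write $\varepsilon=\eta^m$, where $\eta>1$ is the fundamental unit of $K$ and $m\ge 1$, and suppose $m\ge 2$. Because $N(\varepsilon)=1$ we have $\varepsilon'=\varepsilon^{-1}$, hence $d_\varepsilon=(\varepsilon-\varepsilon^{-1})^2=(\eta^m-\eta^{-m})^2$. Exactly as in Lemma \ref{3.18}, and using $d_\varepsilon=d_K\le d_\eta$ (valid since $\Z[\eta]\subseteq\mathcal{O}_K$) together with $d_\eta=(\eta-\eta')^2\le(\eta+\eta^{-1})^2$, I obtain the chain
\[
(\eta^2-\eta^{-2})^2\le(\eta^m-\eta^{-m})^2=d_\varepsilon=d_K\le d_\eta\le(\eta+\eta^{-1})^2 .
\]
Factoring $\eta^2-\eta^{-2}=(\eta-\eta^{-1})(\eta+\eta^{-1})$ and cancelling the positive factor $(\eta+\eta^{-1})^2$ yields $(\eta-\eta^{-1})^2\le 1$, that is $\eta\le\frac{1+\sqrt 5}{2}$.

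Finally I would produce the contradiction. Since $\mathcal{O}_K=\Z[\sqrt{d}]$, the fundamental unit can be written $\eta=a+b\sqrt{d}$ with $a,b\in\Z$; the normalisation $\eta>1$ together with $N(\eta)=\pm1$ forces $a,b\ge 1$, so $\eta\ge 1+\sqrt{d}=1+\sqrt{4n^2-1}$. For every $n\ge 1$ this is strictly larger than $\frac{1+\sqrt 5}{2}$, contradicting the bound just obtained. Hence $m=1$, i.e. $\varepsilon$ is the fundamental unit, and the construction yields infinitely many $n$. I expect the only point needing care to be the discriminant bookkeeping of the middle paragraph --- in particular checking $\Z[\varepsilon]=\mathcal{O}_K$ and $d_\varepsilon\le d_\eta$ --- but this is genuinely easier than in Lemma \ref{3.18}. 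Alternatively, one could bypass the chain of inequalities entirely: the fundamental unit minimises the coefficient $b$ among units $>1$, and $\varepsilon$ already has $b=1$, which is minimal, so $\varepsilon$ must itself be fundamental; or one could cite Louboutin's theorem directly, as in Lemma \ref{3.18}.
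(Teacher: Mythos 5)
Your argument is correct, but it is not the route the paper takes: the paper's proof of Lemma \ref{3.20} is a one-line reduction to Louboutin's theorem, observing exactly the point you identify --- that $4n^2-1\equiv 3\pmod 4$, so $\mathcal{O}_K=\Z[\sqrt{4n^2-1}]=\Z[\varepsilon]$ and the cited result applies verbatim (unlike in Lemma \ref{3.18}, where the case $n^2+1\equiv 1\pmod 4$ falls outside its scope and forces the discriminant comparison). You instead rerun the discriminant argument of Lemma \ref{3.18} in full, and in fact your version comes out cleaner than the original: because $\Z[\varepsilon]$ is already the maximal order you get $d_\varepsilon=d_K\le d_\eta$ with no factor of $4$, hence the sharper bound $\eta\le\frac{1+\sqrt5}{2}$, and the closing step $\eta\ge 1+\sqrt{4n^2-1}\ge 1+\sqrt3$ gives a contradiction for every $n\ge1$ rather than only for $n$ large. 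All the intermediate claims check out: $N(\varepsilon)=1$ so $\varepsilon'=\varepsilon^{-1}$; $\Z[\eta]$ is a full order so $d_K\mid d_\eta$ and $d_K\le d_\eta$; and $\eta>1$ with $|\eta'|<1$ forces $a,b\ge1$ in $\eta=a+b\sqrt d$. What your approach buys is self-containedness (no external citation needed) and a uniform treatment; what the paper's buys is brevity. Your parenthetical alternative via minimality of the coefficient $b$ is also sound and is arguably the most elementary route of all.
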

 \begin{proof}
     It is a direct application of Louboutin's Theorem (\cite{Louboutin}, Thm 1), because the ring of integers of $\Q(\sqrt{4n^2-1})$ is precisely $\Z[2n+\sqrt{4n^2-1}]=\Z[\sqrt{4n^2-1}]$.
 \end{proof}
    \begin{prop}
        There exist an infinite sequence of real quadratic fields $K/\Q$ such that 
        \begin{equation*}
        \frac{\log(R_K)}{\log(\sqrt{|d_K|})}\underset{m_K\to+\infty}{\to}0.    
        \end{equation*}
    \end{prop}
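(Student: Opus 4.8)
The plan is to use directly the two explicit families of real quadratic fields produced by Lemma \ref{3.18} and Lemma \ref{3.20}, whose decisive feature is that their fundamental units are as small as the Pell equation allows (Richaud--Degert type), so that the regulator grows only logarithmically in the discriminant. Concretely I would take on the one hand $K=\Q(\sqrt{n^2+1})$ with $n^2+1$ square-free, which exists for infinitely many $n$ by Lemma \ref{3.18}, and on the other hand $K=\Q(\sqrt{4n^2-1})$ with $4n^2-1$ square-free by Lemma \ref{3.20}; each choice yields an infinite sequence of real quadratic fields, so the proposition is really about two such sequences.

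First I would record the size of the regulator. For the first family $R_K=\log\varepsilon$ with $\varepsilon=n+\sqrt{n^2+1}$, and since $\sqrt{n^2+1}<n+1$ one has $2n<\varepsilon<2n+1$, whence $R_K=\log n+O(1)$. For the second family $\varepsilon=2n+\sqrt{4n^2-1}$ satisfies $4n-1<\varepsilon<4n$ (using $(2n-1)^2<4n^2-1$ for $n\ge 1$), so again $R_K=\log n+O(1)$. Next I would compute the discriminant: according to the residue mod $4$ one has $d_K\in\{n^2+1,\,4(n^2+1)\}$ in the first case and $d_K=4(4n^2-1)$ in the second, so in all cases $\sqrt{|d_K|}\asymp n$ and therefore $\log\sqrt{|d_K|}=\log n+O(1)$. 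Since the conductor of a quadratic field equals $|d_K|$, the regime $m_K\to+\infty$ is equivalent to $n\to+\infty$ within each family.

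Combining the two estimates gives
\begin{equation*}
\frac{\log(R_K)}{\log(\sqrt{|d_K|})}=\frac{\log\bigl(\log n+O(1)\bigr)}{\log n+O(1)}\sim\frac{\log\log n}{\log n}\underset{n\to+\infty}{\longrightarrow}0,
\end{equation*}
which is exactly the asserted limit, valid along each of the two infinite sequences.

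I do not expect a genuine obstacle at this stage: all the arithmetic difficulty has already been isolated in the preliminary results, namely producing infinitely many square-free values of $n^2+1$ and $4n^2-1$ (Theorem \ref{3.17} and the intervening proposition) and certifying that the displayed $\varepsilon$ is the \emph{fundamental} unit (Lemmas \ref{3.18} and \ref{3.20}), so that $R_K=\log\varepsilon$ with no hidden exponent. The only subtle point to keep straight is that the quantity appearing is $\log R_K$ and not $R_K$ itself: one is comparing the double logarithm $\log\log n$ against $\log n$, which is precisely what forces the ratio to tend to $0$ rather than to the positive constant it would approach if $R_K$ stood in the numerator.
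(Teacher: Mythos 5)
Your proposal is correct and follows essentially the same route as the paper: both arguments invoke Lemmas \ref{3.18} and \ref{3.20} to get the two families $\Q(\sqrt{n^2+1})$ and $\Q(\sqrt{4n^2-1})$ with explicitly small fundamental units, deduce $R_K=\log n+O(1)$ and $\log\sqrt{|d_K|}=\log n+O(1)$, and conclude that the ratio behaves like $\log\log n/\log n\to 0$. Your version merely spells out the discriminant cases and the bounds on $\varepsilon$ a little more explicitly than the paper does.
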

    \begin{proof} 
    We present two such infinite sequences.
          By Lemma \ref{3.18} (respectively Lemma \ref{3.20} ) and the fact that $R_K$ is the logarithm of the fundamental unit, there exist infinitely many integers $n$ such that $\log(R_K)\sim\log(\log(n))$ and $\log(\sqrt{|d_K|})\sim \log(n)$, where $K=\Q(\sqrt{n^2+1})$ (respectively $K=\Q(\sqrt{4n^2-1})$). Because $m_K$ grows with $n$, the result holds.  
    \end{proof}
      \begin{coro}
          There exist infinitely many real quadratic fields $K/\Q$, such that 
          \begin{equation*}
              \frac{|\tilde{Cl(K)}|}{\log(\sqrt{|d_K|})}\underset{m_K\to+\infty}{\to}1
          \end{equation*}
      \end{coro}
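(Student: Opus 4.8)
The plan is to specialize the Brauer--Siegel argument already used for the imaginary quadratic and abelian CM cases to one of the two explicit sequences of real quadratic fields produced in the preceding Proposition, along which the regulator is asymptotically negligible. The statement should be read in the logarithmic sense (exactly as $h_K\sim\sqrt{|d_K|}$ is meant in Theorem \ref{3.8}), so concretely I would prove
$\log|\tilde{Cl(K)}|/\log\sqrt{|d_K|}\to 1$.

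Fix, say, the sequence $K=\Q(\sqrt{n^2+1})$ with $n^2+1$ square-free and fundamental unit $\varepsilon=n+\sqrt{n^2+1}$ supplied by Lemma \ref{3.18} (the sequence $\Q(\sqrt{4n^2-1})$ of Lemma \ref{3.20} works identically). Since $[K:\Q]=2$ is bounded while $|d_K|\to\infty$, the Brauer--Siegel Theorem \cite{Lang} applies directly and yields $\log(h_KR_K)/\log\sqrt{|d_K|}\to 1$. By the preceding Proposition the regulator is small along this sequence, namely $\log R_K\sim\log\log n=o(\log n)=o(\log\sqrt{|d_K|})$; subtracting this contribution gives $\log h_K/\log\sqrt{|d_K|}\to 1$, precisely as in the imaginary quadratic computation.

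It then remains to show that the P\'olya factor is asymptotically negligible. By Lemma \ref{l1.5} the group $Po(K)$ has exponent dividing $[K:\Q]=2$, and it is generated by the classes of the $s_K$ ambiguous primes $\sqrt{p\mathcal{O}_K}$ attached to the ramified primes $p$; hence $Po(K)$ is an $\F_2$-vector space of dimension at most $s_K=\omega(|d_K|)$, so that $|Po(K)|\le 2^{\omega(|d_K|)}$. The standard estimate $2^{\omega(N)}=N^{o(1)}$ (equivalently $\omega(N)=O(\log N/\log\log N)$) then forces $\log|Po(K)|=o(\log|d_K|)=o(\log\sqrt{|d_K|})$. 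Finally, since $|\tilde{Cl(K)}|=h_K/|Po(K)|$ we have $\log|\tilde{Cl(K)}|=\log h_K-\log|Po(K)|$, and dividing by $\log\sqrt{|d_K|}$ the first term tends to $1$ and the second to $0$, which proves the claim.

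The one genuinely delicate point is the control of $|Po(K)|$ in the third step: one must rule out that the number of ramified primes grows fast enough to make $\log|Po(K)|$ comparable to $\log\sqrt{|d_K|}$, and this is exactly where the subpolynomial bound on $2^{\omega}$ enters. Everything else is a direct transcription of the Brauer--Siegel computation already carried out for imaginary quadratic and CM fields, combined with the small-regulator sequences constructed above.
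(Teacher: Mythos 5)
The paper states this corollary without proof, so there is no argument of the author's to compare against; judged on its own, your proof is correct and is the natural completion of the preceding results. Two remarks. First, you are right that the statement must be read logarithmically: as printed, $|\tilde{Cl(K)}|/\log\sqrt{|d_K|}\to 1$ would assert $|\tilde{Cl(K)}|\sim\tfrac{1}{2}\log|d_K|$, which nothing in the paper (or in Brauer--Siegel) could deliver; the intended claim is clearly $\log|\tilde{Cl(K)}|/\log\sqrt{|d_K|}\to 1$, in line with Theorem \ref{3.8}. Second, and more substantively, your control of the P\'olya factor is a genuine addition rather than a citation: the paper's own Theorem only gives $|Po(K)|/\sqrt{|d_K|}\to 0$, i.e.\ $\log|Po(K)|\le\log\sqrt{|d_K|}-c_K$ with $c_K\to+\infty$, which yields only $\limsup\log|Po(K)|/\log\sqrt{|d_K|}\le 1$ and not the $o(\log\sqrt{|d_K|})$ bound the corollary actually requires. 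Your observation that $Po(K)$ is an $\F_2$-vector space of dimension at most $s_K\le\omega(|d_K|)$ (by Lemma \ref{l1.5} and generation by the ambiguous primes), combined with $2^{\omega(N)}=N^{o(1)}$, supplies exactly this missing quantitative input. The remaining steps --- Brauer--Siegel in the fixed degree $2$, the small-regulator sequences of Lemmas \ref{3.18} and \ref{3.20}, and the subtraction $\log|\tilde{Cl(K)}|=\log h_K-\log|Po(K)|$ --- are a faithful transcription of the imaginary quadratic computation and are sound.
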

  
\newpage
\printbibliography

@book{cah,
author={Paul-Jean Cahen and Jean Luc Chabert},
title={Integer-Valued Polynomials},
year={1997},
publisher={Mathematical Surveys and
Monographs, Vol. 48, Amer. Math. Soc., Providence,}
}

@book{Lang,
author={Serge Lang},
title={Algebraic Number Theory},
year={1970},
publisher={Addison-Wesley: Reading, MA}
}

@book{zan,
author={Hans Zantema},
title={Integer valued polynomials over a number field},
year={1982},
publisher={manuscripta mathematica, Springer-Verlag}
}

@book{hil,
author={David Hilbert},
title={Die Theorie der algebraischen Zahlkörper},
year={1894-95},
publisher={Jahresbericht der Deutschen Mathematiker-Vereinigung 4}
}

@book{al,
author={Amandine Leriche},
title={P\'olya fields, P\'olya groups and P\'olya extensions: a question of capitulation},
year={2011},
publisher={Journal de Théorie des nombres de Bordeaux},
}

@book{jlc,
author={Jean-Luc Chabert},
title={From P\'olya fields to P\'olya groups (I) Galois extensions},
year={2018},
publisher={http://arxiv.org/abs/1805.11585v1},
}

@book{ya,
author={Osamu Yahagi},
title={Construction of Number Fields with Prescribed
$l$-class Groups},
year={1978},
publisher={Tokyo J. Math.
volume 1, No. 2},
}

@book{wash,
%author={Lawrence C. Washington},
%title={Introduction to cyclotomic fields},
%year={199},
%publisher={Springer-Verlag},
%}

@book{Her,
author={Jürgen Neukirch},
title={Algebraic Number Theory},
year={1999},
publisher={Springer. Section III.2},
}

@book{jlc2,
        author = {Jean-Luc Chabert},
        title = {From Polya fields to Polya groups, (II) non-galoisian number fields},
         publisher ={	arXiv:1811.03648},
        %number = {1},
        %month = {jul},
        year = {2018}
}

@book{Esterman,
        author = {Theodor Estermann},
        title = {Einige Sätze über quadratfreie Zahlen},
         publisher ={Math. Ann. 105},
        %number = {1},
        %month = {jul},
        year = {1931}
}

@book{Louboutin, 
author={Stéphane R. Louboutin},
        title = {The fundamental unit of some quadratic, cubic or
quartic orders},
         publisher ={J. Ramanujan Math. Soc. 23, No.2},
        %number = {1},
        %month = {jul},
        year = {2008}
}

@book{HB, 
author={D. R. Heath-Brown},
        title = {Square-free values of $n^2 + 1$},
         publisher ={Acta Arithme},
        %number = {1},
        %month = {jul},
        year = {2012}
}

@book{sti,
        author = {Henning Stichtenoth},
        title = {Algebraic Function Fields and Codes},
        publisher ={Springer},
        year = {2008}
        }

@book{vj,
        author = {Victor Bautista-Ancona, Jose Uc-Kuk},
        title = {The discriminant of abelian number fields},
        publisher ={Journal of mathematics, Volume 47, Number 1},
        year = {2017}
        }
\end{document}